\newtheorem{theorem}{Theorem}[section]
\newtheorem{lemma}[theorem]{Lemma}
\newtheorem{proposition}[theorem]{Proposition}
\theoremstyle{definition}
\newtheorem{definition}[theorem]{Definition}
\theoremstyle{remark}
\newtheorem{remark}[theorem]{\sc Remark}
\newtheorem{example}[theorem]{\sc Example}
\begin{document}
\title[Topology of the fiber at infinity and examples of Broughton
]{Classification at infinity of polynomials of degree 3 in 3 variables}

\author[NILVA RODRIGUES RIBEIRO]{NILVA RODRIGUES RIBEIRO}

\email{nilvaufv@gmail.com}
\address[N. R. Ribeiro]{Universidade Federal de Vi\c cosa-Campus Rio
  Parana\'iba, Instituto de Ci\^encias Exatas e Tecnol\'ogicas,
  Rodovia MG-230, Km 7 Rio Parana\'iba / MG CEP: 38810-000 Caixa Postal 22.}

\begin{abstract}
We classify singularities at infinity of polynomials of degree 3 in 3 variables, $ f (x_0, x_1, x_2) = f_1 (x_0, x_1, x_2) + f_2 (x_0, x_1, x_2) + f_3 (x_0, x_1, x_2) $, $ f_i $ homogeneous polynomial of degree $ i $, $ i = 1,2,3 $. Based on this classification, we calculate the jump in the Milnor number of an isolated singularity at infinity, when we pass from the special fiber to a generic fiber. As an application of the results, we investigate the existence of global fibrations of degree 3 polynomials in $\mathbb{C}^3$
 and search for information about the topology of the fibers in each equivalence class.
\end{abstract}

\maketitle

\section{Introduction}

The study of natural fibrations of a polynomial $f: \mathbb C^n \to \mathbb C$ was introduced by Broughton \cite{Brou} long ago. At the same time,  Pham \cite{Ph} studied the conditions for a polynomial to have a good behavior at infinity,  and H\`a and L\^e \cite{HL} obtained a criterion of regularity at infinity for complex polynomials in two variables. Since then the global theory of singularities of polynomials has been developed from the point of view of this article, with contributions by \cite{mih},
\cite{mih1}, \cite{st1}, \cite{durfe}, \cite{le2}, \cite{mih3}, and others. 

In the local case, the presence of singularities is a natural obstruction for the existence of a trivial fibration associated to the germ $f$. In the global context, however,  the fibers of a polynomial can be topologically distinct, even without the presence of singularities. The values  of $ f $ for which the topology of the fiber changes are denominated {\em atypical} values. The determination of these special values depends on the behavior of $ f $ at infinity. 

In the case of polynomials in two variables, different characterizations  of atypical values are known, whereas in higher dimensions this is still an open problem. 

As in the local case, the Milnor number at infinity, and the sum  of (boundary) Milnor numbers of the generic fiber are useful invariants for studying the topology of the fiber. 

In \cite{Sier}, Siersma and Smeltink  classified the singularities at infinity of polynomials of degree 4 in two variables, obtaining conditions for the equivalence of polynomials whose homogeneous part of degree 4 are equivalent.

 In this work we use Siersma and Smeltink's method to classify singularities at infinity of polynomials 
$f:\mathbb{C}^3\rightarrow \mathbb{C}$ of degree 3 in 3 variables, $ f (x_0, x_1, x_2) = f_1 (x_0, x_1, x_2) + f_2 (x_0, x_1, x_2) + f_3 (x_0, x_1, x_2) $, $ f_i $ homogeneous polynomial of degree $ i $, $ i = 1,2,3 $. We restrict the classification to the case that all compactified fibers have only isolated singularities. Based on this classification, we study the equisingularity at infinity of the family $f=t$. We say that a polynomial $f$ is of Broughton type if $f$ has no affine singularities and the set of atypical values is non-empty.
In each equivalence class of a degree 3 polynomial in $\mathbb{C}^3$, we give conditions for the existence of examples of Broughton type. 

\section{General setting}
Consider a polynomial $f:\mathbb{C}^n\rightarrow \mathbb{C},$ $X=\{x\in \mathbb{C}^{n}; f(x)=0\}$. The zero set $X$ is an affine variety embedded in $\mathbb{C}^{n}$. Let $Sing(f)$ be the singular locus of $f,$ that is, $Sing(f)=\{x\in \mathbb{C}^{n}; grad(f)=0\}.$ We consider $\mathbb{P}^{n}$ as the standard compactification of $\mathbb{C}^{n}$ for some  fixed affine coordinates. We use the following notations: let $ \overline{X}$ be the compactification of $X$ in $\mathbb{P}^{n}$, $X^\infty = \overline{X} \cap H^\infty$ its intersection with the hyperplane at infinity $H^\infty = \mathbb{P}^{n-1}$ and $ X_t = f^{-1}(t)$.
We write  $f=f_d + f_{d-1} + \ldots + f_0$, $f_{i}$  a homogeneous polynomial of degree $i=0,\ldots, d,$  and $F=f_{d}+x_{n+1}f_{d-1}+\ldots +x_{n+1}^{d}f_{0}$ the homogenization of $f$. Then we can associate to $F$ the hypersurface  $$\mathbb X:=\{((x:x_{n+1}),t)|\in \mathbb P^n\times \mathbb C: F(x,x_{n+1})-tx_{n+1}^d=0\}.$$ The map $\tau: \mathbb{X} \to \mathbb C$ is the projection to the  $t$-coordinate and $\tau^{-1}(t)=\overline{X_t}$.

 At a point $p  \in H^\infty$ we consider the boundary pair $<\overline{X_t}, \overline{X_t}\cap H^\infty>_p$ which is a family of germs depending on $t \in \mathbb{C}$. We say that $X_t$ has a singularity at infinity if at least one of the members of this pair is singular.  Singular points of
 $\overline{X_t}$ at infinity are solutions of $grad (f_d=0)$. We can distinguish
 between two types:
 
(i) Singular points of $X_t^\infty$ where $\overline{X}_t$ is smooth. These are given by the conditions $grad(f_d)=0$ and $ f_{d-1}\neq 0$.\\
(ii) Singular points of $X_t^\infty$, where $\overline{X}_t$ is singular. These are given by the conditions $grad(f_d)=0$ and $f_{d-1} =0$.
\begin{definition}
The polynomial  $f$ is general at infinity at a point $Q$ if $ \overline{X} \pitchfork H^\infty$ at $Q$. We say $f$ is general at infinity if this condition holds for all $Q  \in \overline{X} \cap H^\infty. $
\end{definition}

\begin{definition}
The polynomial $f:\mathbb{C}^n\rightarrow \mathbb{C}$ is topologically trivial at infinity if $f$ is locally topologically trivial for all $t_0 \in \mathbb{C}.$

\end{definition}

The following definition was given in \cite{st2}.

\begin{definition}(Definition 4.1 in \cite{st2}) We define the
  following classes of polynomials.
  \begin{itemize}
  \item [\bf (i) ] We say $f$ is a {\it $ \mathcal{F}$-type} polynomial if its compactified fibers and their restrictions to the hyperplane at infinity have at most isolated singularities.
  \item [\bf (ii) ] We say $f$  is a {\it $ \mathcal{B} $-type} polynomial if its compactified fibers have at most isolated singularities.
  \end{itemize}
The $ \mathcal {F}$-class is contained in the $ \mathcal {B}$-class. Moreover, both are contained in the $ \mathcal {W }$-class, consisting of 
polynomials for which the proper extension $\tau: \mathbb{X} \rightarrow \mathbb{C} $ has only isolated singularities with respect to some  Whitney stratification of $\mathbb{X}$ such that $\mathbb{X}^\infty=\mathbb X \cap H^{\infty}$ is an union of strata, see \cite{st1}. 
\end{definition}

Based on results of \cite{st1} and \cite{st2} we can get information about the topology of the generic fiber $X_t.$ The following theorem (\cite{st1},Theorem 3.1) is known as the Bouquet Theorem.

\begin{theorem}(\cite{st1},Theorem 3.1)\label{bouq}
Let $f:\mathbb{C}^n\rightarrow \mathbb{C}$ be a polynomial with isolated $ \mathcal {W }$-singularities at infinity. Then the general fiber of $f$ is homotopy equivalent to a bouquet of spheres of real dimension $n-1$. 
\end{theorem}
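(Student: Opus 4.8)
The plan is to pin down the homotopy type of the general fiber $G := X_t$ (for generic $t$) by combining two independent facts: an upper bound on its homotopical dimension and a lower bound on its connectivity. Together these force $G$ to be a wedge of spheres in the middle dimension $n-1$. For the dimension bound I would invoke the Andreotti--Frankel theorem. For generic $t$ the affine fiber $G=f^{-1}(t)$ is a smooth closed submanifold of $\mathbb{C}^n$ of complex dimension $n-1$; smoothness holds because the $\mathcal{W}$-singularities are isolated and hence confined to finitely many values of $t$ and to $H^\infty$, so a generic $G$ is smooth and transverse to the hyperplane at infinity. Morse theory applied to the squared-distance function on a smooth affine variety then shows that $G$ has the homotopy type of a finite CW complex of real dimension at most $n-1$; in particular $\tilde H_i(G)=0$ for $i\geq n$.

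The heart of the matter is to prove that $G$ is $(n-2)$-connected, and here the hypothesis of isolated $\mathcal{W}$-singularities is essential. I would work with the proper family $\tau:\mathbb{X}\to\mathbb{C}$, fix a small disk $D$ about the generic value $t$, and compare $G$ with a nearby atypical fiber via the vanishing cycles of $\tau$. Since every singularity of $\tau$ relative to the Whitney stratification is isolated, the local Milnor fibration at each such point has a Milnor fiber that is itself $(n-2)$-connected, with reduced homology concentrated in degree $n-1$, by the classical bouquet theorem for isolated hypersurface singularities. The Whitney stratification with $\mathbb{X}^\infty$ a union of strata guarantees a locally trivial product structure of $\tau$ away from these finitely many points (see \cite{st1}), so that passing from an atypical fiber to $G$ amounts to attaching cells of dimension $n-1$ only, supplied by these local vanishing cycles. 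This gluing, together with the connectivity of the local data, yields that $G$ is $(n-2)$-connected.

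Finally I would assemble the two bounds. A space with the homotopy type of a CW complex of dimension at most $n-1$ that is $(n-2)$-connected has reduced homology concentrated in degree $n-1$, and that group is free abelian because there are no cells above dimension $n-1$. For $n\geq 3$ the fiber is simply connected, so by the Hurewicz theorem $\pi_{n-1}(G)\cong H_{n-1}(G)$ is free; choosing maps $S^{n-1}\to G$ representing a basis gives a map from a wedge of $(n-1)$-spheres that induces an isomorphism on homology and hence, by Whitehead's theorem, a homotopy equivalence. The low-dimensional cases $n=1,2$ are handled directly, the fiber being a finite set of points or a connected curve.

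The main obstacle is the connectivity step. One must ensure that the singularities at infinity contribute vanishing cycles only in the middle dimension $n-1$ and that no homotopy or homology can leak into lower degrees. It is precisely the isolatedness of the $\mathcal{W}$-singularities, combined with the compatibility of the Whitney stratification with $H^\infty$, that controls the attaching maps and prevents such leakage; making this gluing argument rigorous, rather than the formal homological bookkeeping at the end, is where the real work lies.
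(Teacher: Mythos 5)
First, a point of comparison: the paper does not prove this statement at all --- it is quoted verbatim from Siersma--Tib\u ar (\cite{st1}, Theorem 3.1) --- so your proposal must be measured against their proof, whose outline (a dimension bound plus a connectivity bound, assembled by Hurewicz--Whitehead) you have correctly reproduced. The genuine gap is in your connectivity step, and it is structural: your comparison runs in the wrong direction. You propose to obtain the general fiber $G$ from a nearby atypical fiber by ``attaching cells of dimension $n-1$ only.'' No such cell-attachment relation from the special fiber to the generic fiber holds in general, and even granting it, the argument would be circular: the connectivity of the atypical fiber is exactly as unknown as that of $G$, so gluing $(n-1)$-cells onto it yields no lower bound on the connectivity of $G$. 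The actual mechanism in \cite{st1} is the reverse and is anchored by a contractible space: because $\tau$ is proper and the $\mathcal{W}$-singularities are isolated, the total space over a large disk --- and hence $\mathbb{C}^n$ itself --- is obtained, up to homotopy, from $G$ by attaching cells of dimension $n$, one group of cells for each affine critical point and each isolated singularity at infinity. Since attaching $n$-cells does not change $\pi_i$ for $i\leq n-2$, contractibility of $\mathbb{C}^n$ forces $\pi_i(G)=0$ for $i\leq n-2$ and forces $H_{n-1}(G)$ to be free. Your version has no contractible endpoint to anchor this bookkeeping.

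There are two further flaws in the supporting claims. First, your local input is too weak: at the singular points of $\tau$ at infinity the ambient space $\mathbb{X}$ is in general itself singular (the type (ii) points, where $\overline{X}_t$ is singular), and at type (i) points the stratified singularity involves the boundary pair $(\mathbb{X},\mathbb{X}^\infty)$; in neither situation does the classical Milnor bouquet theorem for isolated hypersurface singularities in a \emph{smooth} ambient space apply. What is needed is L\^e's theorem on functions with isolated singularities on a singular space (\cite{le2}), together with the fact that $\mathbb{X}$ is a hypersurface, hence a local complete intersection of dimension $n$ with maximal rectified homotopical depth, which is what guarantees that the local vanishing data is $(n-2)$-connected and concentrated in degree $n-1$. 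Second, your claim that a generic $G$ is ``transverse to the hyperplane at infinity'' is false: as the paper itself observes, the singularities of $X_t^\infty$ do not depend on $t$, so once $f_d$ has a singular zero no compactified fiber is transverse to $H^\infty$. Fortunately Andreotti--Frankel needs only smoothness of the affine fiber $G$, which does hold for generic $t$, so your dimension bound survives; but as written the statement signals a misreading of how the singularities at infinity persist across the whole family.
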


Let now $(p,t) \in \mathbb{P}^{n-1} \times \mathbb{C} $ be a singular point of $\mathbb{X}_t^\infty.$ This may be a singular point of $\mathbb{X}_t$, or a point where $\mathbb{X}_t$ is non-singular but tangent to $H^\infty$ at $p$. If $(p,t)$ is an isolated singularity of $\mathbb{X}_t^\infty$, then we denote its Milnor number by $\mu_{p}^\infty$. Notice that the singularity $(p,t) \in \mathbb{X}_t^\infty $ does not depend on $t$. In contrast,  the Milnor number of the fiber $\mathbb{X}_t$ at the point $p$, that we denote by $\mu_p(\mathbb{X}_t)$, may jump at a finite number of values of $t$.  Let us denote by $\mu_{p, gen}$ the value of $\mu_{p}(t)$ for generic $t$.

For  a finite number of bifurcation values $t$ this type can change and the Milnor number can drop with a value $\lambda_p^t = \mu_p(\mathbb{X}_t)- \mu_{p}^\infty$. We denote by 
\begin{center}
$\lambda$ = the sum of all jumps in the family $f=t$.
\end{center}
In the case $f$ has only isolated singularities (in the affine space), we denote by 
\begin{center}
$\mu$ = the total Milnor number of the affine singularities.
\end{center}
Notice that this invariant can be computed by the following formula $$\mu= dim_{\mathbb{C}}\frac{\mathbb{C}[x_1, \ldots, x_n]}{Jf},$$ where the ideal $Jf \subset \mathbb{C}[x_1, \ldots, x_n]$ is the Jacobian ideal of $f$ (see for instance, \cite{st3}, pg 1).

In this paper we consider degree 3 polynomials in  $\mathbb{C}^3$, so that the fibers $\mathbb{X}_t$ are in general singular surfaces in $\mathbb{C}^3$, and $\mathbb{X}_t^\infty$ are singular curves in $\mathbb{P}^2$. 

It follows from Theorem \ref{bouq} that $b_0(\mathbb{X}_t)=1$ and $b_1(\mathbb{X}_t)=0$, where $b_i(\mathbb{X}_t), i=0,1$ are Betti numbers of the generic fiber $\mathbb{X}_t.$

For the $ \mathcal {F}$-class one can combine  two formulas from  \cite{st1} and \cite{st2}, to compute the second Betti number  $b_2$ of the generic fiber:
\begin{center}
\begin{equation}\label{eq1}
b_2=\lambda + \mu = (d-1)^3 - \displaystyle\sum_i (\mu_{p_i}gen + \mu_{p_i}^\infty)
\end{equation}
\end{center}
The right hand side can be computed via boundary data. In the left hand side $\lambda$ is the sum of all jumps in the family $f=t$. This  makes it possible to compute not only $b_2$ but also $\mu$. A similar formula exists for $ \mathcal{B}$-type, and we refer to \cite{st2}, pg 663-664.
\begin{center}
\begin{equation}\label{eq2}
b_{2}(G)=\lambda + \mu = (\chi^{3,d}-1)-\displaystyle\sum_{x \in \sum}\mu_{x,gen} - \chi^\infty 
\end{equation}

\end{center}
where $G$ is the generic fiber, and $\chi^{3,d}$ is the Euler characteristic of the smooth hypersurface $V_{gen}^{3,d}$ of degree  $d$ in $\mathbb{P}^3$ of the generic fiber $G$ of $f$ and $\chi^{\infty}= \chi(\{f_d(x)=0\}).$ In general, the following formula holds (\cite{st3}, pg 8)
$$\chi^{n,d}= \chi V_{gen}^{n,d}= n + 1 - \frac{1}{d}\{1 + (-1)^n(d-1)^{n+1}\}.$$ 

We shall denote by $Atyp (f)$ the set of atypical fibers of $f.$ It is known that $Atyp(f)=f(Sing(f))\cup B_{\infty}(f),$ where $B_{\infty}(f)$ comes from the contribution of singularities at infinity.

\section{Classification of polynomials of degree 3} 

The purpose of this section is to classify singularities at infinity of polynomials $\mathbb{C}^3 \rightarrow \mathbb{C}$  of degree 3  of the form
\begin{center}
 $f(x_0, x_1, x_2) = f_1(x_0, x_1, x_2)+
f_2(x_0, x_1, x_2) + f_3(x_0, x_1, x_2),$ 
\end{center}
$f_i$ is homogeneous polynomial of degree $i.$ We write $f_1 (x_0,x_1,x_2)=a_0x_0 + a_1x_1 + a_2x_2$, $f_2 (x_0,x_1,x_2)=a_3x_0^2 + a_4x_0x_1 + a_5x_0x_2 + a_6x_1^2 + a_7x_1x_2 + a_8x_2^2.$ Let $t \in \mathbb{C},$ the homogenization $F$ of $f-t=0$ is given by

\begin{center}
 $F(x_0, x_1, x_2,x_3) = x_3^2f_1(x_0, x_1, x_2)+
x_3f_2(x_0, x_1, x_2) + f_3(x_0, x_1, x_2) - tx_3^3.$ 
\end{center}

\begin{definition}
We say that $f$ is affine equivalent to  $g$ $ (f  \approx  g) $ if there exist linear  affine transformations $T:\mathbb{C}^3\rightarrow \mathbb{C}^3$, $L: \mathbb{C}\rightarrow \mathbb{C}$, such that $g= L\circ f \circ T^{-1}$.
\end{definition}
Notice that if $f$ and $g$ are equivalent, the linear transformation $L$ sends the fibers $f=t$ of $f$  to the fibers $g=L(t)$ of $g.$

Our aim in this section  is to classify the singularities at infinity of the fibers $f=t.$  We give  a complete classification of polynomials of type $ \mathcal {F}$  in Theorem \ref{nod} and the polynomials of type $ \mathcal {B}$ are classified in Proposition \ref{rdrs}.

We start the classification by making linear changes of coordinates to reduce the homogeneous polynomial $f_3$ to one of the following normal forms (see \cite{bri} or \cite{bru}).

\vspace{0.2cm} \label{cubica}
 \begin{enumerate}
\item [(a)]  general: $x_0^3 + x_1^3 + x_2^3 + x_0x_1x_2$
\item [(b)] nodal: $x_0^3 + x_1^3 + x_0x_1x_2$.
\item [(c)] cuspidal: $-x_0^3  + x_2x_1^2$.
\item [(d)] conic plus tangent: $(x_0^2+ x_1x_2)x_1$. 
\item [(e)] conic plus chord: $(x_0^2 + x_1x_2)x_0$. 
\item [(f)] three concurrent lines: $x_0^3+ x_1^3$ 
\item [(g)] triangle: $x_0x_1x_2$. 
\item [(h)] double line plus simple line: $x_0x_1^2$. 
\item [(i)] triple line: $x_1^3$.
 \end{enumerate}

  Let $\overline X_t \subset \mathbb P^3$ be the cubic surface defined by $F - tx_3^3=0$.\\
 Note that affine equivalences extend to the projective space sending $H^\infty$ to $H^\infty$. Translations act as the identity on $H^\infty$. The compatification $\overline X_t$ is sent to $\overline {g^{-1}(t)}$ biholomorphically. The types of local singularities do not change by affine equivalences.



Our classification is based on changes of coordinates and the recognition principles of singularities of function germs that we review in section \ref{recog}.

\subsection{Recognition of simple singularities} Set $g:(\mathbb C^3,0)\rightarrow (\mathbb C,0) $ be a holomorphic function germ at the origin. \label{recog}

We recall the normal forms of simple singularities of germs of functions \\ $g:(\mathbb{C}^3,0)\to (\mathbb{C},0),$ due to Arnol'd \cite{arnold}.
 \begin{enumerate}
\item [] $A_k: x^{k+1} + y^2 + z^2; k \geq 1$
\item [] $D_k: x^{k-1} + xy^2 + z^2; k \geq 4$
\item [] $E_6: x^3 + y^4 + z^2$
\item [] $E_7: x^3 + xy^3 + z^2$
\item [] $E_8: x^3 + y^5 + z^2$
 \end{enumerate}
The results in this section are from Bruce and Wall \cite{bru}.

The map-germ $g:(\mathbb{C}^3,0)\rightarrow (\mathbb{C},0) $ is quasihomogeneous of type $(w_1,w_2,w_3;d)$ if $f(\lambda^{w_1}x, \lambda^{w_2}y, \lambda^{w_3}z)= \lambda^d f(x,y,z)$. The normal forms of simple singularities are quasi homogeneous of the  following types:
 \begin{enumerate}
\item [] $A_k: (2,k+1,k+1; 2k +2) (k \geq 1)$
\item [] $D_k: (2, k-2, k-1;2k-2) (k \geq 4)$
\item [] $E_6: (4,3,6;12)$
\item [] $E_7: (6,4,8;18)$
\item [] $E_8: (10,6,15;30)$
 \end{enumerate}
A function $f$ is semiquasihomogeneous with respect to the weights $(w_1,w_2,w_3;d)$ if all terms of weight $< d $ in its Taylor expansion vanish and those of weight $d$ define a function with an isolated singularity.
 \begin{lemma} [\cite{bru}, Lemma 1(a)]\label{split} If $f(x,y,z)$ is semiquasihomogeneous with respect to one of the sets of weights above we can, by change of coordinates, reduce the terms of weight d to the normal forms for $A_k, D_k, E_6, E_7  $ or $E_8$ given above,and the resulting function will remain semiquasihomogeneous.
 \end{lemma}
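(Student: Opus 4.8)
The plan is to prove Lemma~\ref{split}, which asserts that a semiquasihomogeneous function can be reduced by a change of coordinates so that its weight-$d$ part is exactly one of the standard normal forms $A_k, D_k, E_6, E_7, E_8$, with the result remaining semiquasihomogeneous. The key principle is the following splitting idea: since $f$ is semiquasihomogeneous, its lowest-weight part $f_0$ (the part of weight exactly $d$) is by definition quasihomogeneous with an isolated singularity, hence is a member of the $A$--$E$ list up to the action of the group of quasihomogeneous coordinate changes. I would first invoke the classification of quasihomogeneous isolated singularities of the given weight types, which forces $f_0$ to be equivalent to one of the five normal forms.

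First I would make precise the grading: assign to each monomial $x^a y^b z^c$ the weight $a w_1 + b w_2 + c w_3$ and write the Taylor expansion of $f$ as $f = f_0 + f_+$, where $f_0$ collects the weight-$d$ terms and $f_+$ all terms of weight $> d$ (the hypothesis that terms of weight $< d$ vanish is exactly semiquasihomogeneity). The goal is to find a quasihomogeneous-type coordinate change sending $f_0$ to the literal normal form. Second I would apply the finite determinacy / Morse-type splitting machinery of Arnol'd: because $f_0$ has an isolated critical point, it is finitely determined, and one can construct the coordinate transformation order by order in the weight filtration. Concretely, a change of variables $\phi$ of the form $x_i \mapsto x_i + (\text{higher weight})$ adjusts $f_0 \circ \phi$ only by terms of strictly higher weight, so it absorbs $f_+$ into a new higher-weight remainder without disturbing the weight-$d$ part once that part is already normalized. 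The compatibility of this transformation with the weight grading is what guarantees the output stays semiquasihomogeneous.

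The main technical step is establishing that the normalizing transformation can be chosen to respect the quasihomogeneous filtration, i.e.\ that each coordinate change used to kill an unwanted higher-weight term is itself filtered (raises weight). This is where I would lean on the quasihomogeneity of the normal forms: since $A_k, D_k, E_6, E_7, E_8$ are quasihomogeneous of the stated types $(w_1,w_2,w_3;d)$, their Jacobian ideals are themselves quasihomogeneous, so the standard homotopy/infinitesimal-stability argument (solving $\dot{f} = \langle \mathrm{grad}\, f_0, V\rangle$ for a filtered vector field $V$) can be run weight-by-weight. Because each weight-homogeneous component of $f_+$ lies in the appropriate filtered piece of the Jacobian ideal, one solves for a vector field raising the weight, and integrating it gives the required coordinate change.

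The hard part, and the step I expect to be the genuine obstacle, is the bookkeeping that shows $f_+$ actually lies in the (filtered) gradient ideal of $f_0$ at every relevant weight level, so that the infinitesimal deformation can be trivialized term by term. This requires the isolated-singularity hypothesis on the weight-$d$ part (which makes the gradient ideal of finite codimension) together with a careful check that the quasihomogeneous degrees match up so no obstruction survives. Since this lemma is quoted verbatim as Lemma~1(a) of Bruce and Wall~\cite{bru}, I would ultimately cite their proof for the detailed convergence of the order-by-order construction, and present the argument above as the conceptual outline: normalize the leading quasihomogeneous part via Arnol'd's classification, then trivialize the higher-weight tail using the filtered gradient ideal while preserving the grading.
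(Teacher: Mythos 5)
Your proposal is correct in substance, but note first that the paper contains no proof of this statement at all: the lemma is quoted verbatim from Bruce and Wall \cite{bru} as background for the recognition arguments in Theorem \ref{nod}, so your ultimate deferral to \cite{bru} exactly parallels what the paper does. Comparing your outline with the actual argument in \cite{bru}, your first half is the whole proof of part (a): write $f=f_0+f_+$ with $f_0$ the weight-$d$ part, observe that $f_0$ is quasihomogeneous of the given type with isolated singularity, and normalize it by a \emph{graded} (weight-preserving) coordinate change --- in practice a direct check, listing the finitely many monomials of weight $d$ for each weight vector and normalizing by scalings and completions of squares, with isolatedness ruling out the degenerate configurations. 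Since a graded change sends each weight level to itself, $f_+\circ\phi$ still has only terms of weight $>d$ and the new weight-$d$ part is the normal form; semiquasihomogeneity is preserved automatically. Your ``main technical step'' and ``hard part'' --- trivializing $f_+$ order by order via a filtered vector field and the gradient ideal of $f_0$ --- is not needed for this lemma, which does not claim the higher-weight tail is removed, only that it remains of weight $>d$; what you are outlining there is the complementary determinacy statement (Lemma 1(b) of \cite{bru}, Arnol'd's theorem that a semiquasihomogeneous function with simple principal part is right-equivalent to that part). One caution on that superfluous step: the inclusion of every weight-homogeneous component of $f_+$ in the filtered Jacobian ideal does not follow from finite codimension alone, but from the fact that for these simple types the monomial basis of the Milnor algebra of $f_0$ consists of monomials of weight strictly less than $d$; since the step is unnecessary for the statement at hand, this looseness does not affect the validity of your proof of the lemma, but it would matter if you ran the full determinacy argument.
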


We also quote the following Lemma \cite{bru}.

\begin{lemma} [\cite{bru}, Lemma 4] \label{bino} Let $f=x_0^2 + f_3(x_0,x_1,x_2).$ If $x_0=0$ cuts $f_3=0$ in 3 distinct lines, a double and a simple line, respectively a triple line, then $f=0$ has a $D_4, D_5$ respectively $E_6$ singularity at $0$ and no others. Also $f$ has two possible normal forms for the $D_4$ case, and a unique normal form for $D_5$ and $E_6$.  
 \end{lemma}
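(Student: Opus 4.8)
The plan is to treat $f=x_0^2+f_3$ as a semiquasihomogeneous germ and apply Lemma \ref{split}. The $2$-jet of $f$ is exactly $x_0^2$, so the germ has corank $2$ and the entire classification must be governed by the binary cubic $b(x_1,x_2):=f_3(0,x_1,x_2)$, whose factorization into linear forms is precisely the data ``$3$ distinct lines / double $+$ simple line / triple line''. After a linear change in $(x_1,x_2)$ I would normalize $b$ to $x_1^3+x_2^3$, to $x_1^2x_2$, and to $x_1^3$ in the three cases, and then choose weights adapted to the intended normal form: $(w_{x_0},w_{x_1},w_{x_2};d)=(3,2,2;6)$ for $D_4$, $(4,3,2;8)$ for $D_5$, and $(6,4,3;12)$ for $E_6$, in each case matching $x_0^2$ to the $z^2$-term and $b$ to the pure $(x_1,x_2)$-part of the corresponding Arnol'd form.

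First I would verify the semiquasihomogeneity hypothesis of Lemma \ref{split}. Since the only monomial of $f$ below degree $3$ is $x_0^2$, and the cubic monomials not involving $x_0$ are exactly those appearing in $b$, the normalization of $b$ kills every monomial of weight strictly less than $d$ in each of the three weight systems: the only candidates of weight $<d$ are $x_2^3$, $x_1x_2^2$, $x_1^2x_2$, and their coefficients vanish by the prescribed factorization of $b$. Hence all terms of weight $<d$ vanish, and it remains to check that the weight-$d$ initial form defines an isolated singularity, which is the hypothesis of Lemma \ref{split}.

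In case (a) the weight-$6$ form is $x_0^2+b$ with $b$ having three distinct roots, so it already has an isolated singularity and Lemma \ref{split} reduces $f$ directly to the $D_4$ form $x_0^2+x_1^3+x_1x_2^2$. In cases (b) and (c) the weight-$d$ form is $x_0^2+b+c\,x_0x_2^2$, where $c$ is the coefficient of the (weight-$d$) monomial $x_0x_2^2$ of $f_3$; completing the square in $x_0$ turns it into $z^2+x_1^2x_2-\tfrac{c^2}{4}x_2^4$ in case (b) and $z^2+x_1^3-\tfrac{c^2}{4}x_2^4$ in case (c), which are the $D_5$ and $E_6$ forms exactly when $c\neq0$. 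I expect \textbf{the main obstacle} to be establishing this nonvanishing: it is the step where the ambient hypothesis that $\{f=0\}$ carries an \emph{isolated} singularity at $0$ must be invoked, since for $c=0$ the initial form degenerates (being singular along $\{x_0=x_1=0\}$) and $f$ would acquire a higher $D_k$ or a non-isolated singularity. Granting $c\neq0$, Lemma \ref{split} yields the $D_5$ and $E_6$ normal forms, and the isolatedness together with the assertion ``no others'' follows from the fact that $ADE$ singularities are isolated.

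Finally, the count of normal forms is read off from the reduction together with the residual weight-preserving coordinate freedom: the automorphisms of the binary cubic $b$ leave two inequivalent completions in the $D_4$ case, whereas the reductions for $D_5$ and $E_6$ are rigid, yielding a single normal form each, in agreement with \cite{bru}.
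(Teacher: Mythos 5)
The paper offers no proof of Lemma \ref{bino}: it is quoted from Bruce and Wall \cite{bru}, so the benchmark is their argument, and your reduction is essentially that argument: semiquasihomogeneity with respect to $(3,2,2;6)$, $(4,3,2;8)$, $(6,4,3;12)$, the observation that the only weight-$d$ monomial mixing $x_0$ with $(x_1,x_2)$ is $x_0x_2^2$, and the square completion producing $z^2+x_1^2x_2-\frac{c^2}{4}x_2^4$, respectively $z^2+x_1^3-\frac{c^2}{4}x_2^4$. Your weight bookkeeping is correct, and your identification of $c\neq 0$ as the crux is sharper than the statement itself: as quoted, the lemma is literally false without an isolatedness assumption ($f=x_0^2+x_1^2x_2$ is cut by $x_0=0$ in a double plus a simple line, yet is singular along the whole line $x_0=x_1=0$), which is consistent with how the lemma is used in Theorem \ref{nod}, where the degenerate cases are tabulated separately as non-isolated.

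Two genuine gaps remain. First, the clause ``and no others'' does not follow from ``$ADE$ singularities are isolated'': isolatedness at $0$ says nothing about singular points of $\{f=0\}$ away from $0$, and e.g.\ cubic surfaces routinely carry several isolated singular points. The missing (short) argument: at a singular point of the surface, $f=0$ combined with the Euler identity $\sum_i x_i\partial_i f=2x_0^2+3f_3$ forces $x_0^2=f_3=0$, hence $x_0=0$; there $\partial_1 f=\partial_1 b$, $\partial_2 f=\partial_2 b$ and $\partial_0 f=q(x_1,x_2)$, where $q$ is the coefficient of $x_0$ in $f_3$. In the $D_4$ case $b$ has three distinct roots, so $\nabla b=0$ only at the origin and there are no other singular points; in the $D_5$ and $E_6$ cases $\nabla b$ vanishes along the repeated line $x_1=0$, where $q(0,x_2)=cx_2^2$, so extra singular points occur exactly when $c=0$ --- i.e.\ ``no others'' is again precisely the nondegeneracy $c\neq0$ and requires this computation, not a local remark. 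Second, the count of normal forms is asserted rather than proved, and as phrased it is not even the right kind of statement: a semiquasihomogeneous germ with principal part the $D_4$ form is right-equivalent to that form, so at the germ level the normal form is unique; the ``two possible normal forms for the $D_4$ case'' in \cite{bru} concern the cubic polynomial up to \emph{linear} equivalence --- concretely, whether the higher-weight tail (such as the $x_0^3$ term) can be removed by linear changes of coordinates, which the formal weight-respecting equivalences behind Lemma \ref{split} do not address. Your appeal to ``residual weight-preserving coordinate freedom'' does not engage with this distinction and no computation is offered, so that part of the lemma is left unproven.
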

\subsection{The classification  theorem of polynomials of type $ \mathcal {F}$ }

In Theorem \ref{nod} we classify the fibers $f=t$  of polynomials $f$ for which the degree three homogeneous part $f_3$ has respectively 1, 2 or 3 singularities at infinity. The results of this theorem along with the results of the next section, are listed in the tables 1-6, we use the notation $\infty$ to indicate that the singularity is non-isolated.
 \begin{theorem}\label{nod}
 \begin{enumerate}
\item   [(a)] Let $f_3$ be general, $f_3 \thickapprox x_0^3 + x_1^3 + x_2^3 + \lambda x_0x_1x_2,  \lambda \neq -1$, then $f$ is general at infinity.

\item   [(b)] Let $f_3$ be nodal, $f_3 \approx x_0^3 + x_1^3 + x_0x_1x_2$.Then,  $f$ is affine equivalent to  $$a_0x_0 + a_1x_1 + a_2x_2 + a_5x_0x_2 + a_7x_1x_2 + a_8x_2^2 + x_0^3 + x_1^3 + x_0x_1x_2.$$ In this case, $\overline X_t$ is smooth at infinity or $Q=(0:0:1:0)$ is a singular point at infinity of type $A_k, 1 \leq k \leq 5$ or $Q$ is a non-isolated singularity. The conditions for each singularity type are given in Table 1.

\item [(c)] Let $f_3$ be cuspidal, $f_3 \approx -x_0^3 +x_2x_1^2$. Then $f$ is affine equivalent to $$a_0x_0 + a_1 x_1 + a_2x_2 + a_4x_0x_1 + a_5x_0x_2 + a_8x_2^2 - x_0^3 + x_2x_1^2.$$ The following conditions hold: $\overline X_t$ is smooth at infinity or $Q=(0:0:1:0)$ is a singular point at infinity of type $A_1$, $A_2$, $D_4, D_5$, $E_6$ or $Q$ is a non-isolated singularity. The conditions for each singularity type are  given in Table 2.

\item [(d)]  Let $f_3$ be conic plus tangent, $f_3 \approx x_2x_1^2 + x_0^2x_1$.Then $f$ is affine equivalent to $$ a_0 x_0 + a_1 x_1 + a_2x_2 + a_5x_0x_2 + a_7x_1x_2 + a_8x_2^2 + x_2x_1^2 + x_0^2x_1.$$ It follows that $\overline X_t$ is smooth at infinity or $Q=(0:0:1:0)$ is a singular point at infinity of type $A_1, A_3$, $D_4, D_5$, $E_6$  or $Q$ is a  non-isolated singularity. The conditions for each singularity type are given in Table 3.

\item [(e)]  Let $f_3$ be three concurrent lines, $f_3 \approx x_0^3 + x_1^3$.Then  $f$ is affine equivalent to $$a_0x_0 + a_1x_1 + a_2x_2 + a_4x_0x_1 + a_5x_0x_2  + a_7x_1x_2 + a_8x_2^2 + x_0^3 + x_1^3.$$ It follows that $\overline X_t$ is smooth at infinity or $Q=(0:0:1:0)$ is a singular point at infinity of type $A_k; 2\leq k\leq 5$, $D_4$ or $Q$ is a  non-isolated singularity. The conditions for each singularity are given in Table 4.

\item [(f)] Let $f_3$ be conic plus chord, $f_3 \approx  x_0^3 +
  x_0x_1x_2$.Then $f$ is affine equivalent to $$a_0x_0 + a_1x_1 + a_2x_2 + a_3x_0^2 + a_6x_1^2 + a_8x_2^2  + x_0^3  +
  x_0x_1x_2.$$ Then $\overline X_t$ is smooth at infinity or $Q=(0:0:1:0)$ and
  $R=(0:1:0:0)$ are singular points at infinity. It follows that $Q$ and
   $R$ is a singularity of type $A_1A_0$, $A_1A_1,$ $A_2A_0$, $A_2A_1$,  $A_3A_0$, $A_3A_1$, $A_4A_0$, $A_4A_1$, $A_5A_0,$
  $A_5A_1$ or  a  non-isolated singularity. In Table 5 we give the conditions for
  each singularity type of the points $Q$ and $R$.
		
\item [ (g)]  Let $f_3$ is triangle, $f_3\approx  x_0x_1x_2$. Then $f$ is affine
  equivalent to $$a_0x_0 + a_1x_1 + a_2x_2 + a_3x_0^2 + a_6x_1^2 + a_8x_2^2  + x_0x_1x_2.$$ It follows that $\overline X_t$ is smooth at
  infinity or $Q=(0:0:1:0)$, $R=(0:1:0:0)$ or $S=(1:0:0:0)$ are singular
  points at infinity. The singularities of $Q, R$ and $S$ are of type $A_1A_0A_0,$ $A_2A_0A_0,$ $A_3A_0A_0,$ $A_4A_0A_0,$ $A_1A_1A_0,$ $A_1A_1A_1,$ $A_2A_1A_0,$ $A_2A_1A_1,$ $A_3A_0A_0,$ $A_4A_0A_0,$ $A_3A_1A_0,$ $A_3A_1A_1,$  or else a  non-isolated singularity. In Table 6 we give the conditions for each singularity type of  the point $Q.$ 
\end{enumerate}
 \end{theorem}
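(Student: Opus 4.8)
The plan is to treat the nine normal forms of $f_3$ one at a time, in each case following a common three-stage scheme that reduces the problem to a local study at the finitely many critical points of $f_3$ in $\mathbb{P}^2$. The starting observation is that on $H^\infty=\{x_3=0\}$ one has $\operatorname{grad}(F)=(\partial_{x_0}f_3,\partial_{x_1}f_3,\partial_{x_2}f_3,\,f_2)$, so the singular points of $X_t^\infty=\{f_3=0\}$ are exactly the zeros of $\operatorname{grad}(f_3)$ and are independent of $t$. First (\emph{normalization}), after fixing $f_3$ I would compute its stabilizer inside the affine group — the linear part preserving $f_3$ up to the scaling absorbed by $L$, together with translations, which act trivially on $H^\infty$ — and use this residual freedom to clear coefficients of $f_2$ and $f_1$. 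This is a finite linear-algebra computation carried out once per normal form, and it produces exactly the reduced expressions displayed in items (b)--(g).

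Next (\emph{location}), I would solve $\operatorname{grad}(f_3)=0$ in $\mathbb{P}^2$. For the general cubic this system has no solution, so the first three components of $\operatorname{grad}(F)$ never vanish simultaneously on $H^\infty$; hence $\overline X$ is smooth along $H^\infty$ with tangent plane distinct from $H^\infty$, i.e.\ $\overline X\pitchfork H^\infty$, which is statement (a). For the remaining forms the same system pins down the points $Q$, $R$, $S$ appearing in the statement.

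The heart of the argument is the \emph{local classification}. At a critical point, say $Q=(0:0:1:0)$, I would pass to the chart $x_2=1$ with coordinates $(x_0,x_1,x_3)$ and expand $g=F-tx_3^3$. Its linear part is $f_2(Q)\,x_3$, so $\overline X_t$ is smooth at $Q$ exactly when $f_2(Q)\neq 0$ (case (i)); when $f_2(Q)=0$ (case (ii)) the $2$-jet of $g$ is a quadratic form whose rank I would read off. A nondegenerate $2$-jet gives $A_1$; a positive corank calls for the splitting lemma (Lemma \ref{split}), which peels off the nondegenerate directions and leaves a germ in one or two variables to be matched against the ADE list. In corank one the residual is a single power $x^{k+1}$, and each successive vanishing of a coefficient raises $k$ by one, producing the chains $A_2,A_3,\dots$; in corank two the residual is a binary cubic, and Lemma \ref{bino} converts its root pattern — three distinct, double-plus-simple, or triple — into $D_4$, $D_5$, or $E_6$. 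The vanishing/nonvanishing conditions on the $a_i$ and on $t$ that separate these types are precisely what is tabulated in Tables 1--6, and the non-isolated (``$\infty$'') case is where the residual degenerates so far that no finite determinacy, hence no isolated singularity, remains.

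The main obstacle is the bookkeeping of this third stage. The difficulty is not any single computation but the combinatorics of the degenerate strata: one must show that the successive conditions are nested correctly — for instance, in the nodal case $a_8=0$ already yields an isolated $A_1$ whose $2$-jet $x_0x_1+a_5x_0x_3+a_7x_1x_3+a_2x_3^2$ degenerates only when $a_2=a_5a_7$, after which a further relation is needed to pass to $A_3$, and so on — that at each step the splitting lemma genuinely applies, and that the resulting list is both exhaustive and mutually exclusive, so that every choice of coefficients, including the non-isolated ones, lands in exactly one class. Organizing this cleanly, normal form by normal form, rather than the individual algebra, is the real work.
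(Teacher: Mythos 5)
Your proposal follows essentially the same route as the paper's proof: normalize $f_2$ using the affine transformations (translations) that preserve $f_3$, locate the singular points at infinity as the zeros of $\operatorname{grad}(f_3)$ with smoothness of $\overline X_t$ governed by $f_2(Q)\neq 0$, and then classify the local type in a chart via the recognition principles of Section~\ref{recog} — the splitting/semiquasihomogeneity lemma (Lemma~\ref{split}) for the $A_k$ chains and Lemma~\ref{bino} (root pattern of the residual binary cubic) for $D_4$, $D_5$, $E_6$ — with the coefficient and $t$-conditions producing the tables. This matches the paper's argument step for step; the only remaining content, as you note, is carrying out the case-by-case computations that the paper records in Tables 1--6.
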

\begin{remark}
In the proof below we always keep the same notation $x_0, x_1,x_2, x_3$ for the variables, even after changing  coordinates.
\end{remark}

 \begin{proof}
{ \bf (a)} Is clear from definition.

\vspace{0.2cm}

{ \bf (b)} If $f_3$ is nodal, making changes of coordinates $X_0= x_0 + h_0, X_1= x_1 + h_1$ and $X_2=x_2 + h_2$, we can
   eliminate the quadratic terms $a_3x_0^2, a_4x_0x_1$ and $a_6x_1^2$. The   homogenization $F$ of
   $f$ is given by: 

 $$F=a_0x_0x_3^2 + a_1x_1x_3^2 + a_2x_2x_3^2 + x_3( a_5x_0x_2  +
 a_7x_1x_2 + a_8x_2^2) + x_0^3 + x_1^3 + x_0x_1x_2 - tx_3^3.
 $$

 It is easy to verify that if $a_8 \neq 0$, $\overline X_t$ is smooth at infinity. If $a_8 =0$, $Q=(0:0:1:0)$ is the only singular point of $\overline X_t$ at infinity. So if $a_8 =0$ and $x_2=1$, we have:
$$
  F(x_0, x_1,1,x_3)=a_0x_0x_3^2 + a_1x_1x_3^2 + a_2x_3^2  + a_5x_0x_3
  + a_7x_1x_3 + x_0^3 + x_1^3 + x_0x_1 - tx_3^3.$$

  Let $a_2 \neq a_5a_7$. Then $Q=(0:0:1:0)$  is a singularity of type $A_1$.\\
 Let $a_2=a_5a_7$. Then $$ F(x_0,x_1,1,x_3)= (a_5x_3 + x_1)(x_0 + a_7x_3) + a_0x_0x_3^2 + a_1x_1x_3^2 + x_0^3 +x_1^3 - tx_3^3.$$  We make the change of coordinates 
 $X_0 = x_0 + a_7x_3$, $X_1=x_1 + a_5x_3$, $X_3=x_3$, and keeping the same notation $x_i, i= 0,1,2,3$ for the new coordinates, we get
$F= x_0x_1 + a_0(x_0 -a_7x_3)x_3^2 + a_1(x_1-a_5x_3)x_3^2 + (x_0
-a_7x_3)^3 + (x_1 - a_5x_3)^3 - tx_3^3 =0.$ 
If $\gamma \neq t$, where $$\gamma=-a_0a_7 -a_1a_5 -a_7^3 - a_5^3,$$ then $Q$ is a  singularity  of type $A_2$. If $\gamma = t$, giving weights $(4,4,2;8)$  it follows that if $a_0 \neq -3a_7^2$ and $a_1 \neq -3a_5^2$, then $Q$ is a singularity of type $A_3$. If $\gamma = t$, $a_0 = -3a_7^2, a_7 \neq 0$ and $a_1 \neq -3a_5^2$ ( similarly $\gamma = t$, $a_0 \neq -3a_7^2$, $a_5 \neq 0$  and $a_1 = -3a_5^2$ ),  giving weights $(5,5,2;10)$ it follows that $Q$ is a singularity of type $A_4$. If $\gamma = t$, $a_0 = -3a_7^2, a_7 = 0$ and $a_1 \neq -3a_5^2$ ( similarly $\gamma = t$, $a_0 \neq -3a_7^2$, $a_5 = 0$  and $a_1 = -3a_5^2$ ), giving weights $(6,6,2;12)$ we get that $Q$ is a singularity of type $A_5$. If $\gamma = t$, $a_0 = -3a_7^2$ and $a_1= -3a_5^2$, $Q$ is  a  non-isolated singularity. See Table 1.

{ \bf (c)} If $f_3$ is cuspidal, making changes of coordinates $X_0= x_0 + h_0, X_1= x_1 + h_1$ and $X_2=x_2 + h_2$, we can
   eliminate the quadratic terms $a_3x_0^2, a_6x_1^2$ and $a_7x_1x_2$. The   homogenization $F$ of
   $f$ is given by: 
 $$F=a_0x_0x_3^2 + a_1x_1x_3^2 + a_2x_2x_3^2 + x_3( a_4x_0x_1 +
 a_5x_0x_2 + a_8x_2^2) - x_0^3  + x_1^2x_2 - tx_3^3.$$
 
 It is easy to verify that if $a_8 \neq 0$, $\overline X_t$ is smooth at infinity. If $a_8 =0$, $Q=(0:0:1:0)$ is the only singular point of $\overline X_t$ at infinity. So if $a_8 =0$ and $x_2=1$, we have:
 $$ F(x_0, x_1,1,x_3)=a_0x_0x_3^2 + a_1x_1x_3^2 + a_2x_3^2  +a_4x_0x_1x_3 +  a_5x_0x_3 - x_0^3 + x_1^2 - tx_3^3.$$

Let $a_5 \neq 0$. Then $Q=(0:0:1:0)$  is a singularity of type $A_1$.

Let $a_5 =0$ and $a_2 \neq 0$, then $Q$ is a singularity of type $A_2$. If $a_2=a_5=0,$
then $F= x_1^2 + x_1q_2(x_0x_3) + q_3(x_0,x_3),$ where $q_2(x_0,x_3)=x_3^2 + a_4x_0x_3$ and $q_3(x_0,x_3)= a_0x_0x_3^2 - x_0^3 -tx_3^3$.  The discriminant of the cubic $q_3$ is $D(q_3)= 27t^2 - 4 a_0^3.$ If $D \neq 0$, then $q_3=0$ factors into 3 different lines and $Q$ has type $D_4.$ When $D=0$ and $a_0\neq 0$, the cubic $q_3$ has a double line and a simple line. Let
$\delta= 27a_1^6-a_0^3a_4^6.$ In this case, we have the following possibilities:\\
(i) $D=0, a_0\neq 0$ and $\delta \neq 0,$ then $Q$ has type $D_5$ for 2 different values of $t$;\\
(ii) $D=0$, $a_0 \neq 0$ and $\delta =0$, then the singularity is non isolated.\\
When $D=0$ and $a_0=0$, the cubic $q_3$ has a triple line.  In this case, if $a_1\neq 0,$ $Q$ has type $E_6,$
and if $a_1=0,$ $Q$ is  a  non-isolated singularity. See Table 2.

{ \bf (d)} If $f_3$ is conic plus tangent, making changes of coordinates $X_0= x_0 + h_0, X_1= x_1 + h_1$ and $X_2=x_2 + h_2$, we can
   eliminate the quadratic terms $a_3x_0^2, a_4x_0x_1 $ and $a_6x_1^2$. The   homogenization $F$ of
   $f$ is given by: 
 $$F=a_0x_0x_3^2 + a_1x_1x_3^2 + a_2x_2x_3^2 + x_3(  a_5x_0x_2 + a_7x_1x_2 + a_8x_2^2) + x_0^2x_1 + x_1^2x_2 - tx_3^3.
 $$
 It is easy to verify that if $a_8 \neq 0$, $\overline X_t$ is smooth at infinity. If $a_8 =0$, $Q=(0:0:1:0)$ is the only singular point of $\overline X_t$ at infinity. So if $a_8 =0$ and $x_2=1$, we have:
 $$ F(x_0, x_1,1,x_3)=a_0x_0x_3^2 + a_1x_1x_3^2 + a_2x_3^2   +  a_5x_0x_3  + a_7x_1x_3 + x_0^2x_1 + x_1^2x_2 - tx_3^3.$$

Let $a_5 \neq 0$. Then $Q=(0:0:1:0)$  is a singularity of type $A_1$.\\
Let $a_5 =0$, completing square and making the change $X_1= x_1 + \frac{a_7}{2}x_3$, we get that if $a_2- \frac{a_7^2}{4} \neq 0$, then $Q$ is a singularity of type $A_3$. If $a_2- \frac{a_7^2}{4} = 0$,  making changes of coordinates $X_1=x_1 - \frac{a_7}{2}x_3$ and giving weights $(2,3,2;6)$, then $F= x_1^2 + q_3(x_0,x_3),$ where $$q_3(x_0,x_3)=a_0x_0x_3^2 - \frac{a_7}{2}x_0^2x_3-\frac{a_1a_7}{2}x_3^3 - tx_3^3.$$ Analyzing the discriminant $D(q_3)$ of $q_3$ we have $$D(q_3)= a_7^2(a_0^2-2ta_7-a_1a_7^2)=0 \Rightarrow t=\gamma,$$ where $$\gamma= \frac{a_0^2 - a_1a_7^2}{2a_7} \ \ \ \  if \ \ \ \ a_7 \neq 0  \ \ \ \ or  \ \ \ \ a_7=0.$$ If $a_7 \neq 0$ and $t \neq \gamma$, giving weights $(2,3,2;6)$, then $Q$ is a singularity of type $D_4$. If $a_7 \neq 0$ and $t = \gamma$, giving weights $(2,4,3;8)$, then $Q$ is a singularity of type $D_5$. If $a_7 = 0$ and $a_0 \neq 0$, then $Q$ is a singularity of type $D_5$ for all values of $t$. If $a_0=0$ and $t \neq 0$, $Q$ is a singularity  of type $E_6$. Now if $t=0$, then $Q$ is  a  non-isolated singularity. See Table 3.

{ \bf (e)} If $f_3$ is three concurrent lines, making changes of coordinates $X_0= x_0 + h_0, X_1= x_1 + h_1$ and $X_2=x_2 + h_2$, we can
   eliminate the quadratic terms $a_3x_0^2$ and $a_6x_1^2$. Note the symmetry in $x_0,x_1$. The computations below and Table 4 are up to this symmetry. The   homogenization $F$ of
   $f$ is given by: 
 $$F=a_0x_0x_3^2 + a_1x_1x_3^2 + a_2x_2x_3^2 + x_3( a_4x_0x_1 + a_5x_0x_2  +
 a_7x_1x_2 + a_8x_2^2) + x_0^3 + x_1^3  - tx_3^3.
 $$

 It is easy to verify that if $a_8 \neq 0$, $\overline X_t$ is smooth at infinity. If $a_8 =0$, $Q=(0:0:1:0)$ is the only singular point of $\overline X_t$ at infinity. So if $a_8 =0$ and $x_2=1$, we have:
 $$ F(x_0, x_1,1,x_3)=a_0x_0x_3^2 + a_1x_1x_3^2 + a_2x_3^2  + a_4x_0x_1x_3 + a_5x_0x_3  + a_7x_1x_3 + x_0^3 + x_1^3 - tx_3^3.$$

 If $a_5 \neq 0$ (similarly $a_7 \neq 0$), making changes of coordinates $X_0= a_2x_3 + a_5x_0 + a_7x_1$, to get

\begin{multline*}
F= x_0x_3 + Ax_0x_3^2 + Bx_3^3 + Cx_1^2x_3 + Dx_0x_1x_3 + Ex_1x_3^2 \\+
Fx_1^3 - Gx_0^2x_1 - Hx_0^2x_3 + Ix_0x_1^2 + x_0^3 -tx_3^3,
\end{multline*}
where $$A=\frac{a_0}{a_5} + \frac{3a_2^2}{a_5^3}, B=\frac{-a_0a_2}{a_5} - \frac{a_2^3}{a_5^3}, C=- \frac{a_4a_7}{a_5} - \frac{3a_2a_7^2}{a_5^3},  D=\frac{a_4}{a_5} + \frac{6a_2a_7}{a_5^3},$$
$$ E=\frac{-a_0a_7}{a_5} + a_1 -\frac{a_2a_4}{a_5} - \frac{3a_2^2a_7}{a_5^3},  F=1 - \frac{a_7^3}{a_5^3}, G= \frac{3a_7}{a_5^3}, H=\frac{3a_2}{a_5^3}, I=\frac{3a_7^2}{a_5^3}$$
Taking weights $(3,2,3;6)$, we have that if $F \neq 0$, $Q$ is a singularity of type $A_2$. If $a_5 \neq 0$ and $F=0$, such as $a_7 \neq 0$, giving weights $(4,2,4;8)$ it follows that if $IC \neq 0,$ $Q$ is a singularity of type $A_3$. If $a_5 \neq 0$, $a_5^3=a_7^3$ and $C =0$, making changes of coordinates $x_0= X_0- Ex_1x_3$ and $x_3= X_3 - Ix_1^2$, that $E \neq 0$, $Q$ is a singularity of type $A_4$.  If $E=0$ and $B \neq t$, $Q$ is a singularity of type $A_5$. If $B=t$, then $Q$ is  a  non-isolated singularity.

If $a_5=a_7=0$ and $a_2 \neq 0$, giving weights $(2,2,3;6)$, then $Q$ is a singularity of type $D_4$. Now if $a_2=0$, then $Q$ is  a  non-isolated singularity. See Table 4.
 
   { \bf (f)} If $f_3$ is conic plus chord, making changes of coordinates $X_0= x_0 + h_0, X_1= x_1 + h_1$ and $X_2=x_2 + h_2$, we can
   eliminate the quadratic terms $a_4x_0x_1, a_5x_0x_2$ and $a_7x_1x_2$. The   homogenization $F$ of
   $f$ is given by: 
 $$F=a_0x_0x_3^2 + a_1x_1x_3^2 + a_2x_2x_3^2 + x_3( a_3x_0^2  +
 a_6x_1^2 + a_8x_2^2) + x_0^3 + x_0x_1x_2  - tx_3^3.
 $$

 It is easy to verify that if $a_6.a_8 \neq 0$, $\overline X_t$ is smooth at infinity. If $a_8 =0$, $Q=(0:0:1:0)$ is the singular point of $\overline X_t$ at infinity. If $a_6 =0$, $R=(0:1:0:0)$ is the singular point of $\overline X_t$ at infinity. Let $a_8 =0$ and $x_2=1$, we have:
 $$ F(x_0, x_1,1,x_3)=a_0x_0x_3^2 + a_1x_1x_3^2 + a_2x_3^2  + a_3x_0^2x_3  + a_6x_1^2x_3 + x_0^3 + x_0x_1x_2  - tx_3^3.$$

If $a_2,a_6 \neq 0$ and $a_8=0$, $Q$ is a singularity of type $A_1$ and $R$ is  of type $A_0$. If $a_1,a_2 \neq 0$ and $a_6=a_8=0$, both singularities $Q$ and $R$ of type $A_1$. If $a_2=a_8=0$ and  $t \neq 0$, $Q$ is a singularity of type $A_2$, with this information if $a_1=a_6 =0$, $R$ is a singularity of type $A_2$. If $a_2=a_8=t=0$, giving weights $(4,4,2;8)$ it follows that if $a_0a_1 \neq 0$, $Q$ is a singularity of type $A_3$. If  $a_1=a_2=a_8=t=0$, giving weights $(5,5,2;10)$ it follows that if $a_0a_6 \neq 0$, $Q$ is a singularity of type $A_4$.  If  $a_0=a_1=a_2=a_8=t=0$, then $Q$ is  a singularity of type non-isolated. If  $a_1=a_2=a_6=a_8=t=0$, then $Q$ is  a singularity of type non-isolated. If  $a_0=a_2=a_8=t=0$, giving weights $(5,5,2;10)$ it follows that if $a_1a_3 \neq 0$, $Q$ is a singularity of type $A_4$. If  $a_0=a_2=a_3=a_8=t=0$, giving weights $(5,5,2;10)$ it follows that if $a_1 \neq 0$, $Q$ is a singularity of type $A_5$. If  $a_0=a_1=a_2=a_8=t=0$, then $Q$ is  a  non-isolated singularity. $Q$ and $R$  points are symmetric, see Table 5.

{ \bf (g)} If $f_3$ is triangle, making changes of coordinates $X_0= x_0 + h_0, X_1= x_1 + h_1$ and $X_2=x_2 + h_2$, we can
   eliminate the quadratic terms $a_4x_0x_1, a_5x_0x_2$ and $a_7x_1x_2$. The   homogenization $F$ of
   $f$ is given by: 
 $$F=a_0x_0x_3^2 + a_1x_1x_3^2 + a_2x_2x_3^2 + x_3( a_3x_0^2  +
 a_6x_1^2 + a_8x_2^2) + x_0x_1x_2  - tx_3^3.
 $$
 It is easy to verify that if $a_3=a_6.a_8 \neq 0$, $\overline X_t$ is smooth at infinity. If $a_8 =0$, $Q=(0:0:1:0)$ is the singular point of $\overline X_t$ at infinity. If $a_6 =0$, $R=(0:1:0:0)$ is the singular point of $\overline X_t$ at infinity. If $a_3 =0$, $S=(1:0:0:0)$ is the singular point of $\overline X_t$ at infinity. Let $a_8 =0$ and $x_2=1$, we have:
 $$ F(x_0, x_1,1,x_3)=a_0x_0x_3^2 + a_1x_1x_3^2 + a_2x_3^2  + a_3x_0^2x_3  + a_6x_1^2x_3  + x_0x_1x_2  - tx_3^3.$$

 If $a_2,a_3,a_6 \neq 0$ and $a_8=0$, $Q$ is a singularity of type $A_1$, $R$ is a singularity of type $A_0$ and $S(1:0:0:0)$ is a singularity of type $A_0$. If $a_0,a_1,a_2 \neq 0$ and $a_3=a_6=a_8=0$, all singularities $Q$, $R$ and $S$ are of type $A_1$. If $a_2=a_8=0$ and  $t \neq 0$, $Q$ is a singularity of type $A_2$, with this information if $a_1=a_6 =0$, $R$ is a singularity of type $A_2$ and $a_0=a_3=0$, $S$ is a singularity of type $A_2$. If $a_2=a_8=t=0$, giving weights $(4,4,2;8)$ it follows that if $a_0a_1 \neq 0$, $Q$ is a singularity of type $A_3$. If  $a_1=a_2=a_8=t=0$, giving weights $(5,5,2;10)$ it follows that if $a_0a_6 \neq 0$, $Q$ is a singularity of type $A_4$.  If  $a_0=a_1=a_2=a_8=t=0$, then $Q$ is  a non-isolated singularity. If  $a_1=a_2=a_6=a_8=t=0$, then $Q$ is  a non-isolated singularity. If  $a_0=a_2=a_8=t=0$, giving weights $(5,5,2;10)$ it follows that if $a_1a_3 \neq 0$, $Q$ is a singularity of type $A_4$. If  $a_0=a_2=a_3=a_8=t=0$, giving weights $(5,5,2;10)$ it follows that if $a_1 \neq 0$, $Q$ is a singularity of type $A_5$. If  $a_0=a_1=a_2=a_8=t=0$, then $Q$ is  a non-isolated singularity.$Q$, $R$ and $S$ points are symmetric, see Table 6. 
 \end{proof}

 \subsection{ Classification of polynomials of type $\mathcal {B}$}

In Proposition \ref{rdrs}  we give the classification of polynomials of degree 3, classifying   the isolated singularities at infinity of $f=f_1 + f_2 + f_3$, in the cases in which $f_3$ has non isolated singularities. We denote this class of polynomials by  $ \mathcal {B} \setminus \mathcal {F}$ .

\begin{proposition}\label{rdrs}{ \bf (a)} Let $f_3=x_0x_1^2$ (double line plus simple
  line). After a change of coordinates which  leaves invariant the cubic $f_3$, we get the following possibilities for the singular points at
  infinity:
\begin{enumerate}
\item [(1)] Two points $ Q=(1:0:0:0)$ and $R= (-a_8:0:a_5:0)$, where $Q$ has type $A_1$, $R$ has type $A_k$, $2 \leq k \leq 5$, or non-isolated singularity.
\item [(2)] One point $Q=(1:0:0:0)$ with type $A_3$, $D_4$, $D_5$ or non-isolated singularity.
\item [(3)] One point $R=(0:0:1:0)$ with type $A_4$ or $D_5$.
\end{enumerate}
{ \bf (b)} Let $f_3=x_1^3$ (triple line). After a change of coordinates which  leaves invariant the cubic $f_3$, we get that the singular points at  infinity are: 

 \begin{enumerate}
\item [(1)] If  $Q =(1:0:0:0)$ with type type $A_2$, $R=(0:0:1:0)$ with $A_2$, or non-isolated singularity.
\item [(2)] If  $Q=(1:0:0:0)$, with type $A_5$ or non-isolated
  singularity.
\end{enumerate}
\end{proposition}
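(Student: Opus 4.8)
The plan is to treat the two degenerate cubics $f_3 = x_0x_1^2$ and $f_3 = x_1^3$ separately, following the same template as the proof of Theorem \ref{nod}, the one essential new feature being that the singular locus of $f_3$ is now a \emph{line} rather than a finite set of points. For each normal form I would first determine the subgroup of $\mathrm{GL}_3$ preserving $f_3$ up to scalar. For $f_3 = x_0x_1^2$ this group consists of the scalings $x_0 \mapsto \mu x_0$, $x_1 \mapsto \lambda x_1$ together with the substitutions $x_2 \mapsto p x_0 + q x_1 + r x_2$ (with $\mu,\lambda,r \neq 0$), since $x_2$ does not appear in $f_3$; for $f_3 = x_1^3$ the stabilizer is correspondingly larger, only $x_1 \mapsto \lambda x_1$ being constrained. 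Combining these linear symmetries with affine translations (which act trivially on $H^\infty$) I would normalize the lower-order terms $f_1,f_2$ and, crucially, use the freedom in $x_2 \mapsto p x_0 + q x_1 + r x_2$ to put the binary quadratic $f_2|_{\{x_1=0\}} = a_3x_0^2 + a_5x_0x_2 + a_8x_2^2$ into standard shape. This is exactly the content of the phrase ``after a change of coordinates which leaves invariant the cubic $f_3$''.

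Next I would homogenize to $F = x_3^2 f_1 + x_3 f_2 + f_3 - t x_3^3$ and locate the singular points of $\overline X_t$ on $H^\infty = \{x_3 = 0\}$. Since $\partial_{x_i} F|_{x_3=0} = \partial_{x_i} f_3$ for $i=0,1,2$ while $\partial_{x_3} F|_{x_3=0} = f_2$, such a point must satisfy $\nabla f_3 = 0$ together with $f_2 = 0$. For both normal forms $\nabla f_3 = 0$ cuts out the line $\{x_1 = 0\}$, and the extra equation $a_3 x_0^2 + a_5 x_0 x_2 + a_8 x_2^2 = 0$ then isolates (generically) two points on it; after the normalization of the previous step these become $Q = (1:0:0:0)$ and $R = (-a_8:0:a_5:0)$, or, in the degenerate configurations, the positions $Q=(1:0:0:0)$ and $R=(0:0:1:0)$ recorded in the statement. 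This explains at once why these polynomials always lie in $\mathcal B \setminus \mathcal F$: the curve $\overline X_t^\infty = \{f_3 = 0\}$ carries a non-isolated singularity along the whole double (resp. triple) line, whereas the surface $\overline X_t$ can be singular only at the finitely many points just found.

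At each such point I would pass to the affine chart in which the relevant coordinate equals $1$, translate the point to the origin, and read off the local equation of $\overline X_t$. The key observation is that at $Q$ the cubic contributes $f_3(1,x_1,x_2) = x_1^2$ in case (a) and $x_1^3$ in case (b), while the transverse directions $x_2,x_3$ enter only through $x_3 f_2 + x_3^2 f_1$; identifying the singularity type then amounts to choosing the weight system $(w_1,w_2,w_3;d)$ that makes the local equation semiquasihomogeneous and invoking Lemma \ref{split}, or, in the configurations where $x_0 = 0$ meets $f_3 = 0$ in a controlled way, applying Lemma \ref{bino} directly to obtain the $D_4, D_5, E_6$ types. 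In case (a) the quadratic part at $Q$ is $x_1^2 + a_4 x_1 x_3 + a_0 x_3^2 + a_5 x_2 x_3$, which has rank $3$ (hence type $A_1$) precisely when $a_5 \neq 0$; the successive vanishing of $a_5$ and of further coefficients, and finally coincidence with the distinguished value of $t$, produce the chain $A_1 \subset A_2 \subset \dots \subset A_5$, while the presence of $x_1^3$ instead of $x_1^2$ in case (b) raises the initial type to $A_2$ and yields the chain up to $A_5$. The boundary of each stratum is exactly where the local equation ceases to have an isolated critical point, giving the non-isolated cases.

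The main obstacle I anticipate lies in the local analysis at the degenerate positions $R=(0:0:1:0)$ of part (a)(3) and $Q=(1:0:0:0)$ of part (b): there the non-isolated locus of $f_3$ passes through the point, so the naive lowest-weight part of the local equation is itself non-isolated, and one must verify that the perturbation coming from $x_3 f_2$ genuinely restores an isolated surface singularity before computing its Milnor number. This forces the most delicate choice of weights and a careful check that no monomial of weight $\le d$ has been overlooked after each successive vanishing condition; tracking which terms survive is where the argument is most error-prone, and it is precisely this bookkeeping that is summarized in the case list of the statement.
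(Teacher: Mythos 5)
Your plan is essentially the paper's own proof. The paper likewise normalizes by coordinate changes preserving $f_3$ (translations plus linear maps fixing the line $x_1=0$), locates the singular points at infinity as $Sing(f_3)\cap\{f_2=0\}$, i.e.\ the roots of the binary quadratic $f_2(x_0,0,x_2)=a_3x_0^2+a_5x_0x_2+a_8x_2^2$, splits the analysis on its discriminant $a_5^2-4a_3a_8$ (two distinct points versus one merged point), and then identifies the local types chart by chart via the Hessian rank, quasihomogeneous weights (e.g.\ $(4,2,4;8)$ in case (a) and $(6,2,6;12)$ in case (b)), and, in the corank-two cases, completing the square and analyzing the discriminant of the residual cubic in the spirit of Lemma \ref{bino}. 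Your observation that the delicate points are those where the non-isolated locus of $f_3$ interacts with a merged root matches where the paper in fact works hardest.

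One concrete slip in your case structure: you assert that at $Q=(1:0:0:0)$ the successive vanishing of $a_5$ and further coefficients produces the chain $A_1\subset A_2\subset\dots\subset A_5$. That is not how the degenerations organize themselves. Once $a_3=0$ (so that $Q$ is singular), setting $a_5=0$ kills the discriminant $a_5^2-4a_3a_8$, i.e.\ it merges $Q$ with $R$; at the merged point the quadratic part drops in rank and the types are $A_3$, $D_4$, $D_5$ (via the residual cubic with discriminant $D(q)=-4a_8t-a_2^2$), which is item (2) of the statement, not a continuation of the $A_k$ chain. The $A_2$--$A_5$ chain of item (1) lives at the \emph{other} point $R=(0:0:1:0)$, in the regime $a_8=0$, $a_5\neq 0$, and is driven by the successive vanishing of $a_7$, $a_2$, $a_1$ and finally by the special value of $t$. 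Similarly in case (b) there is no intermediate chain: distinct roots give $A_2A_2$ (requiring $a_4a_5\neq 0$), and the merged root jumps directly to $A_5$ (weights $(6,2,6;12)$, $a_4\neq 0$), with no $A_3$ or $A_4$ occurring. Since your machinery (weights plus Lemmas \ref{split} and \ref{bino}) would detect all of this when actually executed, I read this as a misstatement of the expected stratification rather than a methodological gap, but in a classification result the correct attribution of which coefficient degenerations produce which types at which point is precisely the content, so it needed to be fixed.
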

\begin{proof}
{\bf (a)} Let $ f_3=x_0x_1^2$, making changes of coordinates $X_0= x_0 + h_0, X_1= x_1 + h_1$ and $X_2=x_2 + h_2$, we can
   eliminate the quadratic terms $a_4x_0x_1$ and $a_6x_1^2$. In this case, the set $grad(f_3) =0$ gives a $\mathbb{P}^1$ at infinity. That is,  $Sing{f_3}
=\mathbb{P}^1=\{(x_0:0:x_2:0), (x_0,x_2) \in \mathbb{C}^2\}.$ The singularities at infinity are the points of the intersection  $Sing{f_3} \cap
\{f_2=0\}$. Hence, they are  the solutions $f_2(x_0,0,x_2)= a_3x_0^2 +
a_5x_0x_2 + a_8x_2^2=0$. We assume that $((a_3,a_5,a_8)\neq
(0,0,0))$. We distinguish two cases:

{\bf (i)} $ a_5^2 - 4a_3a_8 \neq 0$ and {\bf (ii)} $a_5^2 - 4a_3a_8 =0.$\\
{\bf (i)} When $ a_5^2 - 4a_3a_8 \neq 0$, the polynomial
$f_2(x_0,0,x_2) =0$ has two distinct roots $(\alpha_1x_0 +
\beta_1x_2), (\alpha_2x_0 + \beta_2x_2).$

If $a_8 \neq 0$ then we can
make $x_2=\alpha_1x_0 + \beta_1x_2$ to eliminate $a_3$. In this case
$a_5 \neq 0.$  Then the solutions are $Q=(1:0:0:0)$ and
$R=(-a_8:0:a_5:0)$.
So we have
$$
  F= a_0x_0x_3^2 + a_1x_1x_3^2 + a_2x_2x_3^2  + a_5x_0x_2x_3 + a_7x_1x_2x_3 +
  a_8x_2^2x_3 + x_0x_1^2 -tx_3^3.
$$

The Hessian of $F$ at the point $Q=(1:0:0:0)$, $Hess(F) (
1:0:0:0)=\frac{a_5^2}{4}$. Since $a_5 \neq 0,$ then $Q=(1:0:0:0)$ is
always $A_1$. The  Hessian of $F$ at the
point $R=(-a_8:0:a_5:0)$, $Hess(F)(-a_8:0:a_5:0)= 2a_5a_8$, $R$ is a
singularity of type $A_1$.

If $a_8=0$,  the solutions are $Q=(1:0:0:0)$ and  $R=(0:0:1:0)$,  since $Hess(F) (
1:0:0:0)=\frac{a_5^2}{4} \neq 0$, then $Q=(1:0:0:0)$ is
always $A_1$. Now let's analyze the point $R$. Making  changes of coordinates $x_0= \frac{1}{a_5}(X_0 - a_2x_3 -a_7x_1)$ we have on the chart $x_2=1:$
\begin{multline*}
  F= X_0x_3 + \frac{a_0}{a_5}(X_0 - a_2x_3 -a_7x_1)x_3^2 + a_1x_1x_3^2
  \\+ \frac{1}{a_5}(X_0 - a_2x_3 -a_7x_1)x_1^2 +
  \frac{a_3}{a_5^2}(X_0-a_2x_3-a_7x_1)^2x_3 - tx_3^3
\end{multline*}
If $a_7 \neq 0$, $R$ is a singularity of type $A_2$, if $a_7 =0$ and $a_2 \neq 0$ $R$ is a singularity of type $A_3$. If $a_2=0$ and $a_1 \neq 0$ $R$ is a singularity of type $A_4$, if $a_1=0$ and $t\neq 0$, $R$ is a singularity of type $A_5$ and finally $t=0$, then $R$ is a non-isolated singularity.

{\bf (ii)} When $a_5^2 - 4a_3a_8 =0$, the polynomial
$f_2(x_0,0,x_2)=0$ has only one root, $\alpha x_0 + \beta x_2$.

 If $a_8 \neq 0$ then we can make $x_2=\alpha x_0 +  \beta x_2$ to eliminate $a_3$ and $a_5$. In this case, the only solution is $Q=(1:0:0:0)$ and on the chart $x_0=1:$
$$F=a_0x_3^2 + a_1x_1x_3^2 + a_2x_2x_3^2 + a_8x_2^2x_3 + x_0x_1^2 - tx_3^3$$
 Giving weights $(4,2,4;8)$, if $a_0 \neq 0$,  $Q$ is a  singularity of type $A_3$, otherwise if $a_0=0$ completing square we have
$$F= (x_1 + \frac{a_1}{2}x_3^2)- \frac{a_1^2}{4}x_3^4 + a_2x_2x_3^2 + a_8x_2^2 x_3 - tx_3^3=$$
$$x_1^2  - \frac{a_1^2}{4}x_3^4 + x_3q(x_2,x_3).$$
Discriminant of $q$ is
$$D(q)=-4a_8t- a_2^2.$$
If $D(q)\neq 0$ what is $t \neq \frac{-a_2^2}{4a_8}$ and $a_1 \neq 0$, $Q$ is a singularity of type $D_4$. If $D(q)= 0$ what is $t = \frac{-a_2^2}{4a_8}$ and $a_1 \neq 0$, $Q$ is a singularity of type $D_5$ and finally $a_1=0$, then $Q$ is a non-isolated singularity.

If $a_8=0$, $a_3 \neq 0$, then the solution is $R=(1:0:0:0).$ The calculations are similar to  the first case, we get $A_4$ if $a_7 \neq 0$ and $D_5$ when $a_7=0$. If $a_3=a_5=a_8=0$ the function is no longer of $ \mathcal {B}$-type.
	
{\bf (b)} Let $ f_3=x_1^3$, making changes of coordinates $X_0= x_0 + h_0, X_1= x_1 + h_1$ and $X_2=x_2 + h_2$, we can
   eliminate the quadratic term $a_6x_1^2$. In this case, the set $grad(f_3) =0$ gives a $\mathbb{P}^1$ at infinity. That is,  $Sing{f_3}
=\mathbb{P}^1=\{(x_0:0:x_2:0), (x_0,x_2) \in \mathbb{C}^2\}.$ The singularities at infinity are the points of the intersection  $Sing{f_3} \cap
\{f_2=0\}$. Hence, they are  the solutions $f_2(x_0,0,x_2)= a_3x_0^2 +
a_5x_0x_2 + a_8x_2^2=0$. We assume that $((a_3,a_5,a_8)\neq
(0,0,0))$. We distinguish two cases:

{\bf (i)} $ a_5^2 - 4a_3a_8 \neq 0$ and {\bf (ii)} $a_5^2 - 4a_3a_8 =0.$

{\bf (i)} When $ a_5^2 - 4a_3a_8 \neq 0$, the polynomial
$f_2(x_0,0,x_2) =0$ has two distinct roots $Q$ and $R$.  By a projective transformation, leaving invariant $x_1=0$ are can arrange $Q=(1:0:0:0)$  and $R=(0:0:1:0)$. Therefore we can assume $a_3=a_8=0.$ 
So we have
\begin{multline*}
  F= a_0x_0x_3^2 + a_1x_1x_3^2 + a_2x_2x_3^2  +  a_4 x_0x_1x_3 + a_5x_0x_2x_3 + a_7x_1x_2x_3 +
   x_1^3 -tx_3^3
\end{multline*}

The Hessian of $F$ at the point $Q$, $Hess(F) (
1:0:0:0)=0$. Then when $a_4a_5 \neq 0$, $Q=(1:0:0:0)$ is
always $A_2$. The  Hessian of $F$ at the
point $R$, $Hess(F)(0:0:1:0)= 0$, if $a_4a_5 \neq 0$, $R$ is a singularity of type $A_2$.\\
If $a_4a_5=0$ than $f$ is a non-isolated singularity.

{\bf (ii)} When $a_5^2 - 4a_3a_8 =0$, the polynomial
$f_2(x_0,0,x_2)=0$ has only one root, $Q$.

 By a projective transformation, leaving invariant $x_1=0$ are can arrange $Q=(1:0:0:0)$, $a_3=0$ and therefore also $a_5=0$, $a_8 \neq 0$. On chart $x_0=1:$
$$F=a_0x_3^2 + a_1x_1x_3^2 + a_2x_2x_3^2 + a_4x_0x_1x_3 + a_7x_1x_2x_3 +  a_8x_2^2x_3 + x_1^3 - tx_3^3.$$
If $a_4 \neq 0$, making  changes of coordinates $x_1= \frac{1}{a_4}(X_1 - a_0x_3 -a_8x_2^2)$ and giving weights $(6,2,6; 12)$, 
 $Q$ is a  singularity of type $A_5$, if $a_4 =0$  $Q$ is a non-isolated singularity.
 If $a_8 = 0$,  then $f$ is not of $ \mathcal {B}$-type.
 \end{proof}

  \section{Equisingularity at infinity}
In this section we compute the invariants of the singularities in order to study the topology of the Milnor fiber. The jump $\lambda$ on the Milnor number at infinity will play an important role in the description of the topology of the regular fiber.

A careful description of regularity conditions, equisingularity and  topological triviality at infinity has given by M. Tib\u ar in \cite{mih3} ( see also \cite{st1}, \cite{st2} and \cite{st3}).
 
	As usual, the  notation $A_k\rightarrow A_{k+1}$ means that the singularity at infinity jumped  from $A_k$ to $A_{k+1}$ for some value of the atypical set. For non-isolated singularities we replace $\lambda$ by $*$.

Using the formulas (\ref{eq1}) and (\ref{eq2}) of the section $2$ it is possible to calculate the  Betti number $b_2$ and the Milnor number $\mu$ of the generic fiber.

\begin{definition}\label{equi}
Let $f$  be a polynomial of types $ \mathcal {F}$ or $ \mathcal {B}$. We say that $f = t_0$ has no Milnor-jumps at infinity at the point $Q$ if there is a neighborhood $D$ of $t_0$ in $\mathbb{C}$, such that the jump $\lambda= \mu_{t_0}^Q- \mu_{t}^Q$ is equal to zero, $\forall t \in D$, where $\mu_t$ is the Milnor number of $F$ at the point $Q$.
\end{definition}

Applying the results of Theorem \ref{nod}  and Proposition \ref{rdrs} we can calculate
$\lambda$. Knowing $\lambda$ and using the formulas (\ref{eq1}) and
(\ref{eq2}), we can calculate $b_2$ and $\mu$.

For example if $f_3$ is
nodal and the singularity  of $Q$ is of type $A_3$ for $f=t_0$ and
$A_2$ for  $f=t, \ \ t \neq t_0$, it follows that $\lambda= 1$. From  (\ref{eq1}) we get
that $b_2= \mu + \lambda = 8- (2 + 1)= 5.$  As $\lambda =1$, we get
$\mu= 4$.

 In the case where $f$ has more than one singularity, we need
to check the possible combinations of all singularities.

 For example,
if $f_3$ is conic plus chord, let's say $R$ is a singularity of type
$A_1$ and $Q$ is a singularity of type $A_2$ for $f\neq t_0$ and $A_3$ for $f= t_0$.  Then, for $A_3A_1$
singularities, we have $\lambda=1$, $b_2=3$ and $\mu=2$.

 In Theorem 4.2 and 4.3 we apply the classification given in Theorem 3.4 and Proposition 3.5 to get information about the topology of the generic  fiber $f=t$ for polynomials of type $\mathcal {F}$ and  $\mathcal {B}$. 

\begin{theorem}
Let $f$ be of  $\mathcal {F}$-type. We consider the family $f=t$. The
following Tables $1$ to $6$ give all possibilities for the
singularities of $\overline{X}_t$ at the point $Q$ respectively $R$
and $S$ at infinity.

{\bf (i)} In all cases with singularities of types $A_0$ and $A_1$ only there are no jumps.

{\bf (ii)} All jumps ($\lambda \neq 0$) are indicated  in the tables. The $t$- values are indicated in the proof.

{\bf(iii)} If there are no jumps  ($\lambda=0$) then the family $f=t$ is equisingular at infinity.
\end{theorem}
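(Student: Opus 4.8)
The plan is to extract everything from the classification already established in Theorem \ref{nod}. That proof records, for each normal form of $f_3$ with the coefficients $a_i$ fixed, the simple ($A$--$D$--$E$) type of the germ of $\overline X_t$ at $Q$ (and at $R$, $S$ when present) as a function of $t$. Since these types are simple, their Milnor numbers are immediate, $\mu(A_k)=k$, $\mu(D_k)=k$, $\mu(E_6)=6$, and by Definition \ref{equi} the jump at an atypical value $t_0$ is $\lambda=\mu^Q_{t_0}-\mu^Q_t$ with $t\neq t_0$ generic. Thus the whole statement reduces to tracking how this type varies with $t$, case by case; case (a), where $\overline X_t$ is general at infinity and hence nonsingular there, is immediate.

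First I would settle (i). The only place $t$ enters $F$ is the cubic term $-t x_3^3$. In the affine chart at $Q=(0:0:1:0)$, namely $x_2=1$ with local coordinates $(x_0,x_1,x_3)$, this term is cubic in $x_3$ and so contributes nothing to the $1$- and $2$-jets of the germ at the origin; the same holds in the charts at $R$ and $S$, and the points $Q$, $R$, $S$ are themselves $t$-independent points of $H^\infty$. Hence the $1$- and $2$-jets of $F$ at these points do not depend on $t$. Being of type $A_0$ (nonzero differential) or $A_1$ (nondegenerate Hessian) is detected by exactly these jets, so whenever the type is $A_0$ or $A_1$ it remains so for every $t$, forcing $\mu^Q$ constant and $\lambda=0$. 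This proves (i).

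For (ii) I would traverse cases (b)--(g) of Theorem \ref{nod} in parallel with Tables 1--6. In each case the proof of Theorem \ref{nod} already isolates the condition under which the type at $Q$ rises, and $t$ always enters that condition either through a single equation $\gamma=t$ or through the vanishing of a discriminant $D(q_3)=0$, both of which determine finitely many atypical values. For instance, in the nodal case the generic $A_2$ becomes $A_3$ precisely at $t=\gamma=-a_0a_7-a_1a_5-a_7^3-a_5^3$, giving $\lambda=1$; in the cuspidal case $D_4$ becomes $D_5$ at the two roots of $D(q_3)=27t^2-4a_0^3=0$, giving $\lambda=1$ at each. Filling in the value $\lambda=\mu^Q_{t_0}-\mu^Q_t$ together with the corresponding $t_0$ in every line (and marking the non-isolated entries by $*$, as announced) is then routine bookkeeping with the normal forms and weights of Theorem \ref{nod}, and establishes (ii).

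Finally, (iii) follows by combining (i)--(ii) with the classification. If $f=t$ has no jumps, then by the case analysis the type of $\overline X_t$ at $Q$ never rises, so it is the same simple type for all $t$; the family of hypersurface germs $\{F_t=0\}$ at $Q$ then has constant Milnor number and constant $A$--$D$--$E$ type, and taking the conjunction over the finitely many points $Q$, $R$, $S$ yields equisingularity at infinity. I expect the delicate point to be this triviality step, since the L\^e--Ramanujam theorem for $\mu$-constant families is problematic exactly in the surface case (hypersurfaces in $\mathbb C^3$), which is what our germs are. I would avoid that difficulty by exploiting simplicity: for $0$-modal, finitely determined germs constancy of $\mu$ forces constancy of the type, and simple singularities carry no moduli, so the family is right-trivial and hence topologically trivial at infinity.
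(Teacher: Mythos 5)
Your proposal follows essentially the same route as the paper: part (ii) is proved exactly as the paper does, by re-running the case analysis of Theorem \ref{nod} and reading off the finitely many $t$-values where the (in)equalities involving $t$ fail, while parts (i) and (iii) are the paper's ``follow easily'' claims, which you merely make explicit (the $t$-independence of the $1$- and $2$-jets at the fixed points $Q$, $R$, $S$ for (i), and $0$-modality of simple germs to bypass the $n=3$ L\^e--Ramanujam issue for (iii)). The only bookkeeping point to watch is that the tables' $\lambda$ is the \emph{sum} of the jumps over all atypical values (e.g.\ $\lambda=2$ for $D_4\rightarrow D_5$ in Table 2, one jump at each root of $27t^2-4a_0^3=0$), which your per-value computation $\lambda=\mu^Q_{t_0}-\mu^Q_t$ already accounts for.
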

 \begin{proof}
First, notice that $(i)$ and $(iii)$  follow easily. In fact, in all
cases with singularities of type $A_0$ and $A_1$ only, there are no
jumps, that is, $\lambda=0$. In these cases the family $f=t$ is
equisingular at infinity.

To prove $(ii)$ we follow the proofs in Theorem 3.4. Especially the places where $t$ appears in the (in)equalities gives rise to the jumps. The tables contain all necessary information. Special care is needed for combinations of several critical points at infinity $P, Q$ or $S$.

The invariants $\lambda, \mu$ and $b_2$ of the generic fiber $X_t$ take into account the  combinations of singularities $Q, R$  and $S.$
 \end{proof}

 In the following tables, each line corresponds to a class of polynomial (up to affine equivalence) with the same behavior near the boundary $H^{\infty}$. We list only cases with isolated singularities, but in some cases we also list the  ``next''  non-isolated class.

 The notation $X \rightarrow Y$ means that $X$ is the generic type,
 which jumps to $Y$ nongeneric.
 
The expression $\gamma \neq t$ in each table expresses the condition that the fibers $t \neq \gamma$ are generic and $t=\gamma$ is the exceptional fiber.
The last line of each table characterizes the values of $t$ for which the jump occurs.

\begin{table}
$$ \begin{array}[b]{|l|l|c|c|c|c|c|c|c|} \hline
\multicolumn{6}{|c|} {\text{Nodal}: f_3= x_0^3 + x_1^3 + x_0x_1x_2}\\ \hline
\multicolumn{6}{|c|} {f= a_0x_0 + a_1x_1 + a_2x_2 + a_5x_0x_2 + a_7x_1x_2 + a_8x_2^2 + x_0^3 + x_1^3 + x_0x_1x_2}\\ \hline
Q(0:0:1:0) & \neq 0 &  = 0 & \lambda & \mu &  b_2 \\\hline
A_0 & a_8 &  & 0 & 7 & 7\\\hline
A_1 & a_2 - a_5a_7 & a_8 & 0 & 6 & 6 \\\hline
A_2\rightarrow A_3 & \gamma, a_0 + 3a_7^2, a_1 + 3a_5^2 & a_8, a_2-a_5a_7 & 1 & 4 & 5 \\\hline
A_2\rightarrow A_4 & \gamma, a_1 + 3a_5^2, a_7 & a_8, a_2-a_5a_7, a_0 + 3a_7^2 & 2 & 3 & 5 \\\hline
A_2\rightarrow A_5 & \gamma, a_1 + 3a_5^2 &a_8, a_2-a_5a_7, a_0 + 3a_7^2, a_7 & 3 & 2 & 5 \\\hline
A_2\rightarrow A_{\infty} & \gamma & a_8, a_2-a_5a_7, a_0 + 3a_7^2, a_1 + 3a_5^2& $*$ & - & - \\\hline
\multicolumn{6}{|c|} {\text{All jumps occur if }  t=\gamma,
     \text{where }  \gamma= -a_0a_7 - a_1a_5 - a_7^3 - a_5^3}\\ \hline
\end{array}
$$
 \caption{}
 \end{table}

 \begin{table}
   $$
   \begin{array}[b]{|l|l|c|c|c|c|c|c|c|} \hline
\multicolumn{6}{|c|} {\text{Cuspidal: }f_3= -x_0^3 +  x_1^2x_2}\\ \hline
\multicolumn{6}{|c|} {f= a_0x_0 + a_1x_1 + a_2x_2 + a_4x_0x_1 + a_5x_0x_2  + a_8x_2^2 - x_0^3  + x_1^2x_2}\\ \hline
Q(0:0:1:0) & \neq 0 &  = 0 & \lambda & \mu &  b_2 \\\hline
A_0 & a_8 &  & 0 & 6 & 6\\\hline
A_1 & a_5 & a_8 & 0 & 5 & 5 \\\hline
A_2 &  a_2  & a_8, a_5 & 0 & 4 & 4 \\\hline
D_4\rightarrow D_5 & D,\delta & a_8, a_2, a_5 & 2 & 0 & 2 \\\hline
D_4\rightarrow \infty  & D & a_8, a_2, a_5, \delta  & - & - & - \\\hline
D_4\rightarrow E_6 & a_1 & a_8, a_2, a_5,a_0 & 2 & 0 & 2 \\\hline
\multicolumn{6}{|c|} { \text{All jumps occur if } D=27t^2 - 4a_0^3, \delta= 27a_1^6- a_0^3a_4^6}\\ \hline
\end{array}
$$
\caption{ }
\end{table}

 \begin{table}
$$ \begin{array}[b]{|l|l|l|c|c|c|c|c|c|c|c|} \hline
\multicolumn{6}{|c|} {\text{Conic plus tangent: } f_3= x_0^2x_1 + x_1^2x_2}\\ \hline
\multicolumn{6}{|c|} {f= a_0x_0 + a_1x_1 + a_2x_2 + a_5x_0x_2  + a_7x_1x_2 + a_8x_2^2 + x_0^2x_1 + x_1^2x_2}\\ \hline
Q(0:0:1:0) & \neq 0 &  = 0 & \lambda & \mu &  b_2 \\\hline
A_0 & a_8 &  & 0 & 5 & 5 \\\hline
     A_1 & a_5 & a_8 & 0 & 4 & 4 \\\hline
     A_3 &  a_2-\frac{a_7^2}{4}  & a_8, a_5 & 0 & 2 & 2 \\\hline
D_4\rightarrow D_5 & \gamma, a_7 & a_8, a_2- \frac{a_7^2}{4}, a_5 & 1 & 0 & 1 \\\hline
D_5 &   a_0 & a_8, a_2, a_5, a_7 & 0 & 0 & 0 \\\hline
 E_6 \rightarrow \infty  &   &a_8, a_2, a_5,a_0, a_7 & * & - & - \\\hline
 \multicolumn{6}{|c|} {\text{The jumps occur if }t=\gamma, \text{where  }\gamma = \frac{a_0^2-a_1a_7^2}{2a_7} \text{except  if }Q \text{is non-isolated, when  }t=0}\\ \hline
\end{array}
$$
\caption{}
\end{table}

 \begin{table}
$$ \begin{array}[b]{|l|l|c|c|c|c|c|c|c|} \hline
\multicolumn{6}{|c|} {\text{Three concurrent lines: }f_3= x_0^3 + x_1^3 }\\ \hline
\multicolumn{6}{|c|} {f= a_0x_0 + a_1x_1 + a_2x_2 + a_4x_0x_1  + a_5x_0x_2 + a_7x_1x_2 + a_8x_2^2 + x_0^3 + x_1^3}\\ \hline
Q(0:0:1:0) & \neq 0 &  = 0 & \lambda & \mu &  b_2 \\\hline
A_0 & a_8 &  & 0 & 4 & 4\\\hline

A_2 & a_5, a_5^3-a_7^3 & a_8 & 0 & 2 & 2 \\\hline
A_3 & a_5,\gamma & a_8, a_5^3-a_7^3 & 0 & 1 & 1 \\\hline
A_4 & a_5, E & a_8, a_5^3-a_7^3, \gamma & 0 & 0 & 0 \\\hline
A_5 \rightarrow \infty & a_5, B  & a_8, a_5^3-a_7^3, \gamma, E & 0 & 1 & 1 \\\hline
D_4 & a_2 & a_8, a_5,a_7& 0 & 0 & 0 \\\hline
\infty & &a_8, a_2,a_5,a_7 & - & - & - \\\hline
\multicolumn{6}{|c|} { \gamma = \frac{-a_4a_7}{a-5} -
     \frac{3a_2a_7^2}{a_5^3}$, $E=- \frac{a_4a_7}{a_5} -
     \frac{3a_2a_7^2}{a_5^3}$, $B=\frac{-a_0a_2}{a_5} -
     \frac{a_2^3}{a_5^3}. \text{\footnotesize{There are no jumps in this case}}}\\ \hline
   \end{array}
$$
   \caption{ }
 \end{table}

 \begin{table}
$$ \begin{array}[b]{|l|c|c|c|c|c|c|c|c|c|} \hline
\multicolumn{7}{|c|} {\text{Conic plus chord:} f_3= x_0^3 +  x_0x_1x_2  }\\ \hline
\multicolumn{7}{|c|} { f= a_0x_0 + a_1x_1 + a_2x_2 + a_3x_0^2 + a_6x_1^2 + a_8x_2^2 + x_0^3  + x_0x_1x_2  }\\ \hline
Q(0:0:1:0) & R(0:1:0:0) & \neq 0 & =0 & \lambda & \mu & b_2 \\\hline
A_0 & A_0 & a_6,a_8  &  & 0 & 6 & 6 \\\hline
A_1 & A_0 & a_6,a_2 & a_8 & 0 & 5 & 5 \\\hline
A_1 & A_1 & a_1,a_2 & a_6,a_8 & 0 & 4 & 4 \\\hline
A_2\rightarrow A_3 & A_0 & a_6,a_0,a_1 & a_2,a_8 & 1 & 3 & 4 \\\hline
A_2\rightarrow A_3 & A_1 & a_1, a_0 & a_2, a_6,a_8 & 1 & 2 & 3 \\\hline
A_2\rightarrow A_4 & A_0 & a_0,a_6 & a_1,a_2,a_8 & 2 & 2 & 4 \\\hline
A_2\rightarrow A_4 & A_0 & a_1,a_3,a_6 &  a_0,a_2,a_8 & 2 & 2 & 4 \\\hline
A_2\rightarrow A_4 & A_1 & a_1,a_3 & a_0,a_2,a_6,a_8 & 2 & 1 & 3 \\\hline
A_2\rightarrow A_5 & A_0 & a_1, a_6 & a_0,a_2,a_3,a_8 & 3 & 1 & 4 \\\hline
A_2\rightarrow A_5 & A_1 & a_1 & a_2,a_3, a_6,a_8 & 3 & 0 & 3 \\\hline
A_2\rightarrow \infty & \infty &  & a_1,a_2,a_3,a_6,a_8 & * & - & - \\\hline
\multicolumn{7}{|c|} { \text{ All jumps occur if }  t=0. }\\ \hline
\end{array}
$$
\caption{}
\end{table}

\begin{table}[b]
  $$
  \begin{array}[b]{|l|c|c|c|c|c|c|c|c|c|} \hline
\multicolumn{8}{|c|} {\text{Triangle: } f_3=   x_0x_1x_2 }\\ \hline
\multicolumn{8}{|c|} { f= a_0x_0 + a_1x_1 + a_2x_2 + a_3x_0^2 + a_6x_1^2 + a_8x_2^2   + x_0x_1x_2 }\\ \hline
Q(0:0:1:0) & R(0:1:0:0) & S(0:0:1:0) & \neq 0 & =0 & \lambda & \mu & b_2 \\\hline
A_0 & A_0 & A_0 & a_3,a_6,a_8  &  & 0 & 5 & 5 \\\hline
A_1 & A_0 & A_0 &a_6,a_2, a_3 & a_8 & 0 & 4 & 4 \\\hline
A_1 & A_1 & A_0 & a_0,a_2, a_6 & a_3,a_8 & 0 & 3 & 3 \\\hline
A_1 & A_1 & A_1 & a_0,a_1,a_2 & a_3,a_6,a_8 & 0 & 2 & 2 \\\hline
A_2\rightarrow A_3 & A_0 & A_0 & a_3,a_6,a_0, a_1 &  a_2,a_8 & 1 & 2 & 3 \\\hline
A_2\rightarrow A_3 & A_1 & A_0 & a_0, a_1, a_6 & a_2, a_3,a_8 & 1 & 1 & 2 \\\hline
A_2\rightarrow A_3 & A_1 & A_1 & a_0, a_1 & a_2,a_3,a_6,a_8 & 1 & 0 & 1 \\\hline
A_2\rightarrow A_4 & A_0 & A_0 &  a_0, a_3, a_6 & a_1, a_2,a_8 & 2 & 1 & 3 \\\hline
A_2\rightarrow A_4 & A_0 & A_0 & a_1,a_3,a_6 & a_0,a_2,a_8 & 2 & 1 & 3 \\\hline
\multicolumn{8}{|c|} { \text{All jumps occur if }t=0. }\\ \hline
\end{array}
$$
\caption{}
\end{table}

In Theorem 4.3,  we discuss the topology of the generic fiber of $\mathcal {B}-$ type polynomials $f= f_1 + f_2 + f_3$. The results are consequence of the formula (2) for $b_2$ in Section 2, and the following formulas for the top Betti defect, $\Delta_{n-1}(f)$, given by Siersma and Tibar  in  \cite{st1}, \cite{st2} and \cite{st3}.
$$\Delta_{n-1}(f)= (d-1)^n - b_{n-1}(f)$$
$$ \Delta_{n-1}(f) = \displaystyle \sum_{p \in \sum_f^{\infty} \cap \{f_{d-1}=0\}} \mu_p(\overline{X_0}) + (-1)^n \Delta \chi^{\infty},$$
where $$\Delta \chi^{\infty}:=\chi^{n-1,d}- \chi(\{f_d=0\}),$$
and
$$\chi^{n-1,d}= n-\frac{1}{d}\{1 + (-1)^{n-1}(d-1)^n\}$$
denotes the Euler characteristic of the smooth hypersurface $V_{gen}^{n-1,d}$ of degree  $d$ in $\mathbb{P}^{n-1}$.
\begin{theorem}
Let $f$ be of type $\mathcal {B}\setminus \mathcal {F} $-type. We consider the family $f=t$. The following Tables $7$ and $8$ give all possibilities for the singularities of $\overline{X}_t$ at a point $Q$ at infinity.
\end{theorem}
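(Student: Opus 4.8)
The plan is to prove the Theorem 4.3 statement by carrying out exactly the same local analysis that underlies Proposition \ref{rdrs}, now reading off the Milnor-number data needed to populate Tables 7 and 8. The two cubics in question, $f_3 = x_0x_1^2$ (double line plus simple line) and $f_3 = x_1^3$ (triple line), are precisely the cases where $f$ is of type $\mathcal{B}\setminus\mathcal{F}$, so the singular locus of $\overline{X}_t$ at infinity has already been located in Proposition \ref{rdrs}: the candidate singular points are $Q=(1:0:0:0)$, $R=(0:0:1:0)$ (and for case (a) also the point $R=(-a_8:0:a_5:0)$), together with their singularity types as functions of the coefficients $a_i$ and the parameter $t$. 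First I would take these normal forms as the starting input, and for each recorded type $A_k$, $D_k$, $E_6$ attach its Milnor number $\mu^\infty_p = k$, respectively $k$, respectively $6$, using the standard $\mu$ of the simple singularities in Section \ref{recog}.

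\textbf{Computing the invariants.} Next I would apply Definition \ref{equi} to detect the jumps. As in the $\mathcal{F}$-type proof, the decisive observation is that the only place $t$ enters the normal-form computations of Proposition \ref{rdrs} is through a single (in)equality per case — for instance the discriminant condition $D(q)=-4a_8t-a_2^2$ governing the $D_4\to D_5$ transition in case (a)(2), or the condition $t\neq 0$ separating $A_5$ from the non-isolated class in (a)(1) and in (b)(2). For each line of Tables 7 and 8 I would identify the generic type (the one valid for $t$ away from the exceptional value) and the nongeneric type at the special $t$, and set $\lambda = \mu^\infty_{\text{special}} - \mu^\infty_{\text{gen}}$, the difference of the corresponding Milnor numbers. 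Summing over all singular points at infinity gives the total jump $\lambda$ for that class.

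\textbf{Filling the Betti and Milnor columns.} With $\lambda$ in hand I would invoke formula (\ref{eq2}) for the $\mathcal{B}$-type polynomials, together with the top Betti defect formulas of Siersma and Tib\u ar quoted just before the statement, to compute $b_2(G)$ of the generic fiber. Here I would use $n=3$, $d=3$, so that $\chi^{n,d}$, $\chi^{n-1,d}$, and $\chi^\infty=\chi(\{f_3=0\})$ are explicit numbers; note that for these degenerate cubics $\{f_3=0\}$ is a non-reduced or reducible curve, so $\chi^\infty$ must be computed directly for each of $x_0x_1^2$ and $x_1^3$ rather than from the smooth formula. Once $b_2$ and $\lambda$ are known, the relation $b_2 = \lambda + \mu$ yields $\mu$, the total affine Milnor number, completing each row.

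\textbf{The main obstacle.} The routine steps — attaching $\mu$ to named singularities and summing jumps — are mechanical. The genuine difficulty is the correct evaluation of the boundary term $\chi^\infty = \chi(\{f_3=0\})$ in the non-reduced cases, since the formula for $\chi^{n,d}$ assumes a smooth hypersurface and cannot be applied blindly to $\{x_0x_1^2=0\}$ or $\{x_1^3=0\}$; I would instead compute these Euler characteristics by hand (a double line plus a simple line, respectively a triple line, in $\mathbb{P}^2$) and check consistency of the resulting $b_2$ against the independently computed $\lambda+\mu$. A secondary subtlety, flagged already in Theorem 4.2(ii), is the bookkeeping when two distinct points at infinity contribute simultaneously, as in case (a)(1) with both $Q$ and $R$ singular: there the jump and the defect must be aggregated over both points, and I would verify that no double-counting occurs at the shared exceptional value of $t$.
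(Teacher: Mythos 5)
Your proposal is correct and follows essentially the same route as the paper: the actual proof likewise takes the singularity types from Proposition 3.5, evaluates the boundary term directly on the reduced curves (giving $\Delta\chi^{\infty}=0-3=-3$ for $x_0x_1^2$ and $\Delta\chi^{\infty}=0-2=-2$ for $x_1^3$, exactly the hand computation you flag as the main obstacle), and feeds it into the Siersma--Tib\u ar defect formula $\Delta_2(f)=\sum_p \mu_p(\overline{X_0})+(-1)^3\Delta\chi^{\infty}$ with $(d-1)^n=8$ to obtain $b_2=8-\Delta_2(f)$ and then $\mu$ from $b_2=\lambda+\mu$, with the jumps read off the $t$-dependence in Proposition 3.5, point by point including the two-point combinations $A_1A_k$. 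One minor slip worth noting: in the triple-line case (b) the $A_5$ versus non-isolated dichotomy is governed by the coefficient $a_4$, not by $t$, so there is no $t$-jump there (Table 8 records ``no jumps''), and your stated procedure of locating where $t$ enters the normal forms would have corrected this automatically upon execution.
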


\begin{table}[b]
  $$
  \begin{array}[b]{|l|c|c|c|c|c|c|c|c|c|c|} \hline
\multicolumn{7}{|c|} {\text{double line plus simple line:} f_3= x_0x_1^2 }\\ \hline
\multicolumn{7}{|c|} { f= a_0x_0 + a_1x_1 + a_2x_2 + a_3x_0^2 + a_5x_0x_2 + a_7x_1x_2 + a_8x_2^2 + x_0x_1^2 }\\ \hline
Q(1:0:0:0) & R(-a_8:0:a_5:0) & \neq 0 & =0 & \lambda & \mu & b_2 \\\hline
A_1 & A_1 & \gamma, a_8 &  & 0 & 3 & 3 \\\hline
A_1 & A_2 & \gamma, a_7 & a_8 & 0 & 2 & 2 \\\hline
A_1 & A_3 & \gamma, a_2 & a_7,a_8 & 0 & 1 & 1 \\\hline
A_1 & A_4 & \gamma, a_1 & a_2, a_7,a_8 & 0 & 0 & 0 \\\hline
A_1 & A_5 \rightarrow \infty & \gamma & a_1,a_2,a_7,a_8 & - & - & - \\\hline
A_3 &  & a_0 & \gamma & 0 & 2 & 2 \\\hline
D_4\rightarrow D_5 &  & a_1 & \gamma & 1 & 0 & 1 \\ \hline
D_4\rightarrow \infty &  &  & a_1, \gamma & - & - & - \\ \hline
 & A_4 & a_3, a_7  & a_8,a_5 & 0 & 1 & 1 \\ \hline
 & D_5 & a_3  & a_5, a_7, a_8 & 0 & 0 & 0 \\ \hline
\multicolumn{7}{|c|} {\gamma=a_5^2 -4a_3a_8. \text{All jumps occur if }t=\frac{-a_2^2}{16a_8}. }\\ \hline
\end{array}
$$
\caption{ }
\end{table}

\begin{table}[b]
  
$$ \begin{array}[b]{|l|c|c|c|c|c|c|c|c|c|c|} \hline
\multicolumn{7}{|c|} {\text{triple line:} f_3= x_1^3 }\\ \hline
\multicolumn{7}{|c|} { f= a_0x_0 + a_1x_1 + a_2x_2 + a_3x_0^2 + a_4x_0x_1 + a_5x_0x_2 + a_7x_1x_2 + a_8x_2^2 + x_1^3 }\\ \hline
Q(1:0:0:0) & R(0:0:1:0) & \neq 0 & =0 & \lambda & \mu & b_2 \\\hline
A_2 & A_2 & \gamma & a_8 & 0 & 2 & 2 \\\hline
A_5 &  & a_4,a_8 & \gamma & 0 & 1 & 1 \\\hline
\infty &   & a_0, a_8 & \gamma, a_4 & - & - & - \\\hline
\multicolumn{7}{|c|} { \gamma=a_5^2 -4a_3a_8. \text{There are no jumps in this case}}\\ \hline
   \end{array}
   $$
\caption{ }
\end{table}

\begin{proof}
{\bf (a)} We first consider $f= f_1 + f_2 + f_3$, were $f_3(x_0, x_1, x_2)= x_0x_1^2$ and the singularities at infinity are $Q=(0:0:1:0)$ and $R= (-a_8:0: a_5:0)$. If $a_5^2 \neq 4a_3 a_8$, $Q \neq R$, it follows from Proposition 3.5  that $Q$ is a singular point of type $A_1$ and $R$ is $A_k$, $1\leq k \leq 5$. To compute $b_2(f)$, note that $\Delta \chi^{\infty}=-3$. When $(Q,R)$ is $A_1A_k$ we have $\Delta_2(f)= 1 + k + 3=k + 4\Rightarrow b_2(f) = 8-(k + 4)$. If $Q=R$, a singularity is of type $A_3, D_4$ or $D_5$, in these cases we have $\Delta_2(f)= \mu_p(\overline{X}) + 3 \Rightarrow b_2(f) = 8-(\mu_p(\overline{X}) + 3)$. The results appear in Table 7.

{\bf (b)} We first consider $f= f_1 + f_2 + f_3$, where $f_3(x_0, x_1, x_2)= x_1^3$ and the singularities at infinity are $Q=(0:0:1:0)$ and $R= (-a_8:0: a_5:0)$. If $a_5^2 \neq 4a_3 a_8$, $Q \neq R$, follow Proposition 3.5 $Q$ is a singular point of type $A_2$ and $R$ is $ A_2$. To compute $b_2(f)$, note that $\Delta \chi^{\infty}=-2$. When $(Q,R)$ is $A_2A_2$ we have $\Delta_2(f)= 2 + 2 + 2=6 \Rightarrow b_2(f) = 8-6=2$. If $Q=R$, a singularity is of type $A_5$, in these case we have $\Delta_2(f)= \mu_p(\overline{X}) + 2 \Rightarrow b_2(f) = 8-(\mu_p(\overline{X}) + 2)$. The results appear in Table 8.
\end{proof}

\section{Examples of Broughton type and global fibrations}\
In this section we assume that the singularities of all polynomials are of type $ \mathcal {F}$ or $ \mathcal{B}$ .

\begin{definition}
A polynomial $f: \mathbb{C}^n \rightarrow \mathbb{C}$ is  of Broughton type if $f$ has no affine singularities, but the set of atypical values $Atyp(f)$ is non empty.
\end{definition}

According to our notations, if $f$ has no singularities in $\mathbb{C}^3 $, then $(a_0,a_1,a_2) \neq (0,0,0)$.

\begin{theorem}
Let $f= f_1 + f_2 + f_3$, where $f$ is polynomial of degree 3 of type $ \mathcal
{F}$ or $ \mathcal{B}$ on $\mathbb{C}^3$ . If $f$ is a polynomial  of Broughton type  then $\lambda \neq 0, \mu =0$ and the following
conditions hold.
\begin{itemize}
\item [\bf (i)]  $f_3$ is cuspidal and the only singularity at infinity of two special fibers  is of type $D_5$ or a single fiber of type $E_6.$ 
\item [\bf (ii)]  $f_3$ is conic plus tangent and the only singularity at infinity of the special fibers  is of type $D_5.$ 
\item [\bf (iii)]  $f_3$ is conic plus chord and the combination of the singularities at infinity of the special fiber  is of type $A_1A_5.$ 
\item [\bf (iv)]  $f_3$ is triangle and the combination of the singularities at infinity of the special fiber  is of type $A_1A_1A_3.$ 
\item [\bf (v)]  $f_3$ is double line plus simple line and  the singularities at infinity of the special fiber  is of type $D_5.$ 

\end{itemize}
\end{theorem}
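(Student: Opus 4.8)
The plan is to extract the two numerical constraints $\mu = 0$ and $\lambda \neq 0$ directly from the definition of Broughton type, and then to read off the admissible configurations from the classification of Theorem \ref{nod} and Proposition \ref{rdrs}, retaining only those rows of Tables 1--8 that are compatible with both constraints.

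First I would translate the hypotheses. ``No affine singularities'' means $Sing(f)=\emptyset$, so the total affine Milnor number $\mu=\dim_{\mathbb{C}}\mathbb{C}[x_0,x_1,x_2]/Jf$ vanishes, giving $\mu=0$. For the jump I would use the decomposition $Atyp(f)=f(Sing(f))\cup B_\infty(f)$ recalled in Section 2. Since $Sing(f)=\emptyset$ we get $f(Sing(f))=\emptyset$, hence $Atyp(f)=B_\infty(f)$; as Broughton type requires $Atyp(f)\neq\emptyset$, this forces $B_\infty(f)\neq\emptyset$. In the present isolated-singularity framework a value is atypical at infinity exactly when the boundary Milnor number drops, i.e.\ when a Milnor-jump occurs in the sense of Definition \ref{equi}; were $\lambda=0$, the family $f=t$ would be equisingular at infinity by the equisingularity statement established above and would carry no atypical value there. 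Thus $B_\infty(f)\neq\emptyset$ is equivalent to the existence of at least one jump, that is $\lambda\neq 0$. This yields $\mu=0$ and $\lambda\neq 0$.

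Next I would run these two constraints as a filter through the complete list of cases. By Theorem \ref{nod} and Proposition \ref{rdrs} every $\mathcal{F}$- or $\mathcal{B}$-type polynomial reduces, after affine equivalence, to one of the nine normal forms of $f_3$, and Tables 1--8 record the pair $(\lambda,\mu)$ for each singularity configuration at infinity. When $f_3$ is general, $f$ is general at infinity, so there are no singularities at infinity and $\lambda=0$; this is discarded. In Table 1 (nodal) every row with $\lambda\neq 0$ has $\mu\geq 2$, while in Tables 4 and 8 (three concurrent lines, triple line) every row has $\lambda=0$; these three families are discarded as well. In each of the five remaining tables precisely one type of jump survives with $\mu=0$: in Table 2 the jumps $D_4\to D_5$, occurring at the two values $27t^2=4a_0^3$, and $D_4\to E_6$, occurring at the single value $t=0$; in Table 3 the jump $D_4\to D_5$; in Table 5 the jump $A_2A_1\to A_5A_1$; in Table 6 the jump $A_2A_1A_1\to A_3A_1A_1$; and in Table 7 the jump $D_4\to D_5$ at the point $Q$. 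Reading off the type of the nongeneric (special) fiber in these rows produces exactly the configurations (i)--(v), and the two special values in the cuspidal $D_5$ case versus the single one in the $E_6$ case account for the wording in (i).

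I expect the only genuinely delicate step to be the equivalence $B_\infty(f)\neq\emptyset\iff\lambda\neq 0$, since it relies on the fact that in the $\mathcal{F}$/$\mathcal{B}$ setting no atypical behaviour at infinity escapes the drop of the boundary Milnor number; this is furnished by the equisingularity analysis of Section 4 together with Definition \ref{equi}. The remainder is a finite inspection of Tables 1--8, where the only point to check is that each surviving row is realizable. This is automatic: the equality $\mu=0$ already forces the linear part $(a_0,a_1,a_2)\neq(0,0,0)$, so the defining coefficient conditions of each surviving row are met by a polynomial with empty singular locus, as required for Broughton type.
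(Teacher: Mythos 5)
Your proposal is correct and follows essentially the same route as the paper, whose entire proof is the one-line remark that the result ``follows directly from the Tables'': you make explicit the two preliminary reductions the paper leaves implicit ($Sing(f)=\emptyset\Rightarrow\mu=0$, and $Atyp(f)=B_\infty(f)\neq\emptyset\Rightarrow\lambda\neq 0$ via the equisingularity statement of Theorem 4.2) and then perform the same finite inspection of Tables 1--8, correctly discarding the general, nodal, three-concurrent-lines and triple-line cases and retaining exactly the rows giving (i)--(v), including the two $D_5$-fibers at $27t^2=4a_0^3$ versus the single $E_6$-fiber at $t=0$ in the cuspidal case.
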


\begin{proof}
The proof follows directly  from   the Tables.
\end{proof}

\begin{example}
Let  $f_3$ be three concurrent lines or $f_3$ is nodal. Then $ f (x_0, x_1, x_2) = f_1 (x_0, x_1, x_2) + f_2 (x_0, x_1, x_2) + f_3 (x_0, x_1, x_2) $ is not a polynomial of Broughton type. 
\end{example}

\begin{proof} See the tables.
\end{proof}

\begin{theorem}
Let $f= f_1 + f_2 + f_3:\mathbb{C}^3 \rightarrow \mathbb{C}$ a polynomial of degree 3 of type $ \mathcal{F}$ or $ \mathcal{B}$. Then $f$ is  a global fibration iff $\lambda=\mu=0$, which is one of the following cases:
\item [\bf (i)]  $f_3$ is conic plus tangent and the only singularity at infinity of the special fiber is of type $D_5.$ 
\item [\bf (ii)]  $f_3$ is three concurrent lines and the only singularity at infinity of the special fiber is of type $D_4$ or $A_4.$ 
\item [\bf (iii)]  $f_3$ is double line plus simple line and the only singularity at infinity of the special fiber  is of type $A_4$ or $D_5.$

\end{theorem}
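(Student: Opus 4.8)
The plan is to reduce the statement to the decomposition $Atyp(f) = f(Sing(f)) \cup B_{\infty}(f)$ recalled at the end of Section 2, and then to read the surviving classes off Tables 1--8. Since $f$ is a global fibration exactly when it is a locally trivial fibration over all of $\mathbb{C}$, that is, when $Atyp(f) = \emptyset$, the decomposition shows that being a global fibration is equivalent to the simultaneous vanishing of the two contributions $f(Sing(f))$ and $B_{\infty}(f)$. The first task is therefore to identify each of these conditions with the vanishing of one of the invariants $\mu$ and $\lambda$.

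For the affine part I would use that $\mu = \dim_{\mathbb{C}} \mathbb{C}[x_0,x_1,x_2]/Jf$ is the total Milnor number of the affine singularities, so that $\mu = 0$ holds if and only if the Jacobian ideal $Jf$ is the unit ideal, that is, if and only if $Sing(f) = \emptyset$; hence $\mu = 0 \Leftrightarrow f(Sing(f)) = \emptyset$. For the part at infinity I would invoke the equisingularity theorem of Section 4: when $\lambda = 0$ the family $f = t$ is equisingular at infinity, and for polynomials of type $\mathcal{F}$ or $\mathcal{B}$ this upgrades to local topological triviality at every $t_0 \in \mathbb{C}$, so that the boundary contributes no atypical value and $B_{\infty}(f) = \emptyset$. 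Conversely, a nonzero jump $\lambda_p^{t_0} = \mu_p(\mathbb{X}_{t_0}) - \mu_p^{\infty} > 0$ records a genuine change of the boundary data at $t_0$, which obstructs local triviality there and places $t_0$ in $B_{\infty}(f)$; thus $B_{\infty}(f) = \emptyset \Leftrightarrow \lambda = 0$. Combining the two equivalences gives that $f$ is a global fibration if and only if $\mu = \lambda = 0$.

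It remains to list the classes with $\mu = \lambda = 0$, and here I would simply scan the $\lambda$- and $\mu$-columns of the tables obtained from Theorem \ref{nod} and Proposition \ref{rdrs}. In Tables 1, 2, 5, 6 and 8 every row with $\lambda = 0$ still has $\mu > 0$, so the types nodal, cuspidal, conic plus chord, triangle and triple line never produce a global fibration. The only rows in which both invariants vanish are the non-jumping $D_5$ row of Table 3 (conic plus tangent), giving case (i); the $A_4$ and $D_4$ rows of Table 4 (three concurrent lines), giving case (ii); and the $A_4$ and $D_5$ rows of Table 7 (double line plus simple line), giving case (iii). As a consistency check, each of these rows also has $b_2 = 0$, in accordance with formulas (\ref{eq1}) and (\ref{eq2}), which is forced since a global fibration over the contractible base $\mathbb{C}$ is trivial and hence has a contractible fiber. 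Because these are exactly the rows with $\lambda = \mu = 0$, the list (i)--(iii) is exhaustive.

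The step I expect to be the main obstacle is the identification of $\lambda = 0$ with $B_{\infty}(f) = \emptyset$. One must be sure that equisingularity at infinity, expressed through the constancy of the boundary Milnor numbers, genuinely upgrades to topological triviality of the family near $H^{\infty}$, and not merely to constancy of a numerical invariant, and that conversely every jump is detected as an atypical value. For the classes $\mathcal{F}$ and $\mathcal{B}$ this is provided by the regularity and equisingularity results of Tib\u ar and Siersma cited in Section 4; granting this input, the rest is the bookkeeping of the tables.
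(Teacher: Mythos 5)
Your proposal is correct and in substance coincides with the paper's proof: the paper establishes the equivalence ``$\lambda=\mu=0$ $\Leftrightarrow$ global fibration'' by citing Siersma--Tib\u ar (\cite{st1}, Corollary 5.8) and then reads cases (i)--(iii) off Tables 1--8, and your route via $Atyp(f)=f(Sing(f))\cup B_{\infty}(f)$, with $\mu=0\Leftrightarrow f(Sing(f))=\emptyset$ and $\lambda=0\Leftrightarrow B_{\infty}(f)=\emptyset$ deferred to the same Siersma--Tib\u ar results, is just an unpacking of that corollary followed by the same table scan. One small point of care: in Table 7 the row with an $A_4$ singularity and $\lambda=\mu=0$ (hence $b_2=0$) is the combination $A_1A_4$, not the standalone $A_4$ row (which has $\mu=1$, $b_2=1$), so your phrase ``the $A_4$ and $D_5$ rows of Table 7'' is accurate only under that identification, matching the theorem's somewhat loosely worded case (iii).
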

\begin{proof}
$\lambda=\mu=0$ for $ \mathcal{F}$, $ \mathcal{B}$- class $\Leftrightarrow$ global fibration follows from (\cite{st1}, Corollary 5.8).
\end{proof}

\subsection*{Acknowledgment}
Part of the results of this work are from my PhD Thesis \cite{nil} supervised by Maria Aparecida Soares Ruas
and Raimundo Nonato Ara\'ujo dos Santos, to whom I thank. I am especially grateful to Professor Maria Aparecida
Soares Ruas for her patience in checking the calculations several times. I am also very grateful
to the referee for the careful reading and suggestions that improved very much the presentation of this article.




\begin{thebibliography}{99}
\bibitem{ar1}{\sc V.I. Arnold}, {\em Critical points of functions on a manifold with boundary the simple Lie groups $B_k, C_k, F_4$ and singularities of evolutes}, Uspekhi Mat. Nauk 33  (1978) , no 5(203), 91-105, 237.

\bibitem{arnold}{\sc V.I. Arnold}, { \em Singularities of fractions and
  behaviour of polynomials at infinity},Tr. Mat. Inst. Steklova 221 (1998), 48-68.


\bibitem{bt} {\sc A. Bodin,  M. Tibar}, {\em  Topological Triviality of Families of Complex Polynomials}, Adv. Math. 199 2006, 136-150.

\bibitem{bri} {\sc E. Brieskorn and H. Knorrer}, {\em Plane algebraic curves},
  Birkhauser Verlag Basel (1986).
	
	\bibitem{Brou} {\sc S.A. Broughton}, {\em On the topology of polynomial
  hypersurfaces}, Proceedings A.M.S Symp. in Pure Math.,vol. 40, I (1983),
  165-178. 
	
	\bibitem{bru} {\sc J.W. Bruce and C. T. C. Wall}, {\em On The Classification of Cubic Surfaces}, J.London Math. Soc(2) 19 (1979), 245-256.

\bibitem{durfe}{\sc A. H. Durfee}, {\em Five definitions of critical point at
  infinity}, Progress in Math. vol. 162, (1998).
	
	\bibitem{HL} {\sc H. Hamm and  D. T. L\^e}, {\em Sur la topologie de
  polin\^omes complexes}, Acta Math. Vietnamica 9 (1984), 21-32.		
	
	\bibitem{le2}{\sc D. T. L\^e}, {\em Complex analytic functions with isolated
  singularities}, J. Algebraic Geometry, 1 (1992), 83-100.


\bibitem{Ph} {\sc F. Pham}, {\em Vanishing homologies and the $n$ variable
  saddle point method}, Arcata Proc. of Symp. in Pure Math. vol. 40, II
  (1983), 319-333.
	
	 \bibitem{nil} {\sc N. R. Ribeiro}, {\em Singularidades no infinito de fun\c c\~oes polinomiais}, Tese de Doutorado, ICMC-USP (2012).

	
\bibitem{Sier} {\sc D. Siersma and J. Smeltink}, {\em Classification of
  singularities at infinity of polynomials of degree 4 in two variables},
  Georgian Mathematical Journal, Vol. 7 , Number 1, (2000), 179-198.
	
\bibitem{st2} {\sc D. Siersma, M. Tib\u ar}, {\em  Deformations of Polynomials, Boundary Singularities and Monodromy}, Mosc. Math. J. 3 (2003), 1-19.

\bibitem{st1} {\sc D. Siersma, M. Tib\u ar}, {\em  Singularities at infinity and their vanishing cycles}, Duke Math. Journal 80 (3) (1995), 771-783.
	
\bibitem{st3} {\sc D. Siersma, M. Tib\u ar}, {\em  Singularity Exchange at the Frontier of the Space, Real and Complex Singularities},( S\~ao Carlos Workshop 2004). Trends in Mathematics, pp 327-342, Birkhauser Verlag 2006.	
		
\bibitem{mih} {\sc M. Tib\u ar}, {\em On the monodromy fibration of polynomial
  functions with singularities at infinity}, C.R. Acad. Sci. Paris, 324
  (1997), 1031-1035.
			
\bibitem{mih3} {\sc M. Tib\u ar}, {\em  Polynomials and vanishing
    cycles}, Cambridge Tracts in Mathematics, 170. Cambridge University Presss,  (2007).			

\bibitem{mih1} {\sc M. Tib\u ar}, {\em Regularity at infinity of real  and
  complex polynomial maps}, Singularity Theory, the C.T.C. Wall Anniversary
  Volume, LMS Lecture Notes Series 263 (1999), 249-264. Cambridge University
  Press.




  

  

\end{thebibliography}
\end{document}